\def\@abssec#1{\vspace{.05in}\footnotesize \parindent .2in
{\bf #1. }\ignorespaces}
\newtheorem{theorem}{Theorem}[section]
\newtheorem{lemma}[theorem]{Lemma}
\newtheorem{remark}[theorem]{Remark}
\allowdisplaybreaks \numberwithin{equation}{section}
\begin{document}

\title[locally self-similar solutions for Euler equations]
{On the locally self-similar singular solutions for the incompressible Euler equations}

\author{Liutang Xue}
\address{School of Mathematical Sciences, Beijing Normal University and Laboratory of Mathematics and Complex Systems, Ministry of Education, Beijing 100875, P.R. China}
\address{Universit\'e Paris-Est, CERMICS, Ecole des Ponts ParisTech, Cit\'e Descartes, Champs-sur-Marne, 77455 Marne-la-Vall\'ee Cedex 2, France}
\email{xuelt@bnu.edu.cn}
\subjclass[2010]{76B03, 35Q31, 35Q35}
\keywords{Euler equations, Backward self-similar solutions, Nonexistence}
\date{}
\maketitle

\begin{abstract}
  In this paper we consider the locally backward self-similar solutions for the Euler system, and focus on the case that the possible nontrivial velocity profiles have non-decaying asymptotics.
We derive the meaningful representation formula of the pressure profile in terms of velocity profiles in this case, and by using it and the local energy inequality of profiles,
we prove some nonexistence results and show the energy behavior concerning the possible velocity profiles.
\end{abstract}

\section{Introduction}

Perfect incompressible fluids are governed by the well-known Euler system
\begin{equation}\label{Euler}
\begin{cases}
  \partial_t v + v \cdot\nabla v + \nabla p =0, \qquad  \\
  \nabla_x \cdot v =0,\\
  v|_{t=0}=v_0(x),
\end{cases}
\end{equation}
where $(x,t)\in \mathbb{R}^N \times \mathbb{R}^+$, $N=2,3,\cdots$ is the spatial dimension, $v=(v_1,v_2,\cdots,v_N)$ is the velocity vector field of $\mathbb{R}^N$ and $p$ is the scalar-valued pressure field.
Assume $v_0\in H^s(\mathbb{R}^N)$, $s>\frac{N}{2}+1$,
it has been known for decades (e.g. \cite{Kato72}) that there is a unique local-in-time smooth solution $v\in C([0,T[; H^s)$ and the pressure can be expressed up to a constant by
$p=-\mathrm{div} \,\mathrm{div}\Delta^{-1}(v\otimes v)$, that is,
\begin{equation}\label{eq:pexp}
  p(x,t)= -\frac{1}{N} |v(x,t)|^2 + p.v. \int_{\mathbb{R}^N} K_{ij}(x-y) v_i(y,t)v_j(y,t)\,\mathrm{d}y,
\end{equation}
where $K_{ij}(y)= \frac{1}{N|\mathbb{S}^{N-1}|}\frac{Ny_i y_j- |y|^2 \delta_{ij}}{|y|^{N+2}}$ ($i,j=1,2,\cdots,N$)
is the Calder\'on-Zygmund kernel and the Einstein convention on repeated indices is used here and thereafter.
However, for $N\geq 3$, whether such smooth solutions are globally well-posed or they have finite-time blowup remains a challenging open problem.

In this paper we address the problem of the existence or not of the locally backward self-similar solutions for Euler system.
More precisely, we consider solutions that develop a
finite-time self-similar singularity on a spacetime domain $]0,T[\times B_\rho(x_0)$ of the form
\begin{equation}\label{eq:vpSelf}
  v(x,t)= \frac{1}{(T-t)^{\frac{\alpha}{\alpha+1}}} u\bigg(\frac{x-x_0}{(T-t)^{\frac{1}{\alpha+1}}}\bigg),
\end{equation}
and
\begin{equation}\label{eq:vpSelf2}
  p(x,t)= \frac{1}{(T-t)^{\frac{2\alpha}{\alpha+1}}} q \bigg(\frac{x-x_0}{(T-t)^{\frac{1}{\alpha+1}}}\bigg)+ d(t),
\end{equation}
where $(u,q)$ are stationary profiles, $T>0$, $\alpha>-1$, $x_0\in\mathbb{R}^N$, $\rho>0$, and the solutions $v$, $p$ remain regular outside the ball $B_\rho(x_0)$. 
If $\rho=\infty$, i.e. $B_\rho(x_0)=\mathbb{R}^N$, this corresponds to the ``globally" self-similar solutions and $d(t)\equiv0$;
while if $\rho<\infty$, these are the ``locally" self-similar solutions, and $d(t)$ is a function depending only on $t$.
For the locally self-similar solutions, from \eqref{eq:pexp} and \eqref{eq:vpSelf}, it seems not obvious to get the expression \eqref{eq:vpSelf2},
but which can indeed be justified by Lemma \ref{lem:pq} below. In terms of $(u,q)$, we formally have
\begin{equation}\label{EulerSelf}
\begin{cases}
  \frac{\alpha}{\alpha+1} u +\frac{1}{\alpha+1}y\cdot\nabla u + u\cdot\nabla u + \nabla q=0, \\
  \mathrm{div}\,u=0,
\end{cases}
\end{equation}
where $y\in\mathbb{R}^N$ and $ q $ up to a harmonic polynomial is given by $\Delta q=-\mathrm{div}\mathrm{div}(u\otimes u)$, more precisely, $q$ up to a constant is given by \eqref{eq:qy} below according to the value of $\alpha$
and the asymptotic assumptions of $u$.


The self-similar ansatz \eqref{eq:vpSelf}-\eqref{eq:vpSelf2} for the Euler system \eqref{Euler} is widely used in the numerical simulations,
and through studying the vortex filament models or high-symmetric flows, much work suggests that such backward self-similar blowup may happen at a finite time
(see e.g. \cite{BorP,Kerr,Kimu,Pelz}, and very recent work, \cite{HouL,HouL2}).

The self-similar singular solutions of Euler equations are also studied from the analytical viewpoint.
X. He in \cite{He07} constructed non-trivial solutions to the 3D Euler equations \eqref{EulerSelf} with $\alpha=1$ on the exterior domain
$\mathbb{R}^3\setminus B_1(0)$, and the asymptotic decay of such solutions are $|u(y)| \lesssim \frac{1}{|y|}$
and $|\nabla u(y)|\lesssim \frac{1}{|y|^2}$.
Besides that, there are some noticeable nonexistence results on such self-similar solutions in the literature.
D. Chae in \cite{Chae1} considered the globally self-similar solutions to the 3D Euler system and proved that if
$u\in C^1(\mathbb{R}^3)$ and $\omega=\nabla \times u$ belongs to $\cap_{0<r<r_0} L^r(\mathbb{R}^3)$ with some $r_0>0$,
then $\omega\equiv 0$ for all $\alpha>-1$. 
R. Takada \cite{Tak} treated the strong solutions of the self-similar Euler equations \eqref{EulerSelf}
and show $u\equiv 0$ under the condition $u\in C^1_{\mathrm{loc}}\cap X^{2,\infty}\cap L^p$ with $p\in [\frac{3N}{N-1},\frac{4N}{N-2}]$
and $X^{2,\infty}= \{f\in L^2_{\mathrm{loc}}: \sup_R \int_{R\leq |y|\leq 2R} |f(y)|^2\mathrm{d}y <\infty\}$.
See also \cite{He00,Scho} for similar nonexistence results.
For the locally self-similar solutions \eqref{eq:vpSelf}-\eqref{eq:vpSelf2} with $\rho>0$,
D. Chae and R. Shvydkoy \cite{ChaS} proved that if $u\in C^1_{\mathrm{loc}}\cap L^r$
with $r\in [3,\infty]$, then $u\equiv 0$ for all $-1<\alpha <\frac{N}{r}$ and $\alpha>\frac{N}{2}$.
They also improved the result of \cite{Chae1} to get $u\equiv const$ for all $\alpha>-1$
under the assumptions $u\in C^1_{\textrm{loc}}(\mathbb{R}^N)$, $\omega=\nabla\times u\in L^p(\mathbb{R}^N)$ for some $p\in ]0,\frac{N}{1+\alpha}[$, and $|\nabla u(y)|=0(1)$ as $|y|\rightarrow \infty$.
Recently, A. Bronzi and R. Shvydkoy in \cite{BroS} rigorously justified the formula of pressure \eqref{eq:vpSelf2} for the locally self-similar solutions at the case $\alpha>0$ and $\rho>0$,
and under the assumptions $u\in C^3_{\mathrm{loc}}(\mathbb{R}^N)$ and
$$\textrm{for some  }p\geq 3,\gamma<p-2,\quad \int_{|y|\sim L}|u(y)|^p\,\mathrm{d}y\lesssim L^\gamma,\quad \forall L\gg1,\;$$
they proved at the case $0<\alpha<N/2$ either $u\equiv 0$ or the corresponding velocity profiles behave as \eqref{eq:conc}.


In this article we deal with the locally backward self-similar solutions \eqref{eq:vpSelf}-\eqref{eq:vpSelf2}
of Euler equations \eqref{Euler} to show some nonexistence results, and we specially are concerned with the situation that the velocity profiles $u$ have non-decaying asymptotics,
e.g. for some $\delta\in ]0,1[$,
\begin{equation}\label{eq:asum}
  1\lesssim |u(y)| \lesssim |y|^\delta,\quad \forall |y|\gg1.
\end{equation}
This case is not much addressed in the literature (except for the implicitly related nonexistence results based on vorticity profile, e.g. \cite{ChaS}), but it is motivated by the numerical simulations (e.g. \cite{HouL,HouL2})
and especially by several works on the 1D models of Euler equations:
the 1D Burgers equation, the 1D CCF model (cf. \cite{CCF}), the 1D CKY model (cf. \cite{CKY}) and so on. The blowup issue of all these 1D equations is clear:
the Burgers equation develops shock singularity at finite time,
while it is proved in \cite{CCF} and \cite{CKY} respectively that the CCF equation and the CKY equations form finite-time singularity for some smooth data.
The further study (\cite{EggF}, \cite{dlHF}, \cite{HouLiu}, respectively)
shows that the finite-time singularities of all these equations are of locally self-similar type with some index $\alpha\in ]-1,0[$ and the corresponding velocity profiles have growing spacial asymptotics.
Due to the formal analogy of these 1D models with the real Euler equations, it deserves much to consider such a scenario for the Euler equations \eqref{Euler}.

In order to do so, we have to derive a new and meaningful representation formula of the pressure profile in terms of velocity profiles, since the usual one
\begin{equation}
  q(y)=-\frac{1}{N}|u(y)|^2 + p.v. \int_{\mathbb{R}^N} K_{ij}(x-y) u_i(y)u_j(y)\,\mathrm{d}y,
\end{equation}
works for the velocity profiles with suitable decaying asymptotics, e.g. $u\in L^p(\mathbb{R}^N)$ for some $p\in]2,\infty[$, but it does not make sense for the profiles satisfying \eqref{eq:asum}.
We justify the needed formula in Lemma \ref{lem:pq}, which is stated as
\begin{equation}\label{eq:qy0}
  q(y)= -\frac{1}{N} |u(y)|^2 + p.v.\int_{\mathbb{R}^N} K_{ij}(y-z) u_i(z) u_j(z)\,\mathrm{d}z + \bar{q}(y) + A\cdot y,
\end{equation}
where $A\in \mathbb{R}^N$ is some fixed constant vector and
\begin{equation}
\bar{q}(y)=
  \begin{cases}
    -\int_{|z|\geq M}K_{ij}(z) u_i(z) u_j(z)\,\mathrm{d}z,\quad & \textrm{if}\;\;1\lesssim |u(z)| \lesssim |z|^\delta,\delta\in [0,\frac{1}{2}[,\,  \\
    -\int_{|z|\geq M} \big(K_{ij}(z)+ y\cdot\nabla K_{ij}(z)\big)u_i(z) u_j(z)\,\mathrm{d}z ,\quad & \textrm{if}\;\; |z|^{\frac{1}{2}}\lesssim |u(z)|\lesssim |z|^\delta,\delta\in[\frac{1}{2},1[,
  \end{cases}
\end{equation}
and $M>0$ is a large number so that \eqref{eq:asum} holds for all $|y|\geq M$.
The formula \eqref{eq:qy0} is also expressed as the decomposition \eqref{eq:qy-dec} (and its variant), which can be used to show that $q(y)$ belongs to $C^2_{\textrm{loc}}(\mathbb{R}^N)$
as long as $u\in C^3_{\textrm{loc}}(\mathbb{R}^N)$ (see Lemma \ref{lem:pq}).

Our main results read as follows.
\begin{theorem}\label{thm:SS}
Suppose that $\alpha>-1$, $u\in C^3_{\textrm{loc}}(\mathbb{R}^N)$ satisfies that for some $\delta\in ]0,1[$,
\begin{equation}\label{eq:asum2}
  |u(y)| \lesssim |y|^\delta,\quad \forall |y|\gg1.
\end{equation}
and up to a constant $q$ is defined from $u$ by \eqref{eq:qy} below. We have the following statements.
\begin{enumerate}
\item
If additionally there is a small number $0<\epsilon_0\ll \delta$ so that
\begin{equation}\label{eq:asum3}
  |u(y)| \gtrsim |y|^{\epsilon_0}, \quad\forall |y|\gg 1,
\end{equation}
then the possible scope of $\alpha$ to admit nontrivial self-similar velocity profiles is $-\delta\leq \alpha\leq -\epsilon_0$,
and for each $\alpha$, the corresponding profiles satisfy that
\begin{equation}\label{eq:conc}
  \int_{|y|\leq L}|u(y)|^2 \mathrm{d}y \sim L^{N-2\alpha},\quad \forall L\gg1.
\end{equation}
\item
If $\delta<\frac{1}{2}$, $\alpha>-\frac{1}{2}$, and additionally
\begin{equation}\label{eq:asum4}
  |u(y)|\gtrsim 1, \quad\forall |y|\gg 1,
\end{equation}
then the possible range of $\alpha$ to admit nontrivial self-similar profiles is $-\delta\leq\alpha\leq 0$,
and the velocity profiles corresponding to each $\alpha$ satisfy \eqref{eq:conc}.
\end{enumerate}
\end{theorem}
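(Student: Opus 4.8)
The plan is to collapse the self-similar system \eqref{EulerSelf} into a single ordinary differential relation for the local energy $E(L):=\int_{|y|\le L}|u(y)|^2\,\mathrm{d}y$, and then read off both the admissible range of $\alpha$ and the growth law \eqref{eq:conc} from it. First I would take the scalar product of the momentum equation in \eqref{EulerSelf} with $u$ and use $\mathrm{div}\,u=0$ together with the pointwise identities $(y\cdot\nabla u)\cdot u=\mathrm{div}(\tfrac12 y|u|^2)-\tfrac N2|u|^2$, $(u\cdot\nabla u)\cdot u=\mathrm{div}(\tfrac12 u|u|^2)$ and $\nabla q\cdot u=\mathrm{div}(qu)$. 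Integrating the resulting divergence-form identity over $B_L$ and applying the divergence theorem yields, with $\nu=y/|y|$,
\begin{equation}
\frac{2\alpha-N}{2(\alpha+1)}\,E(L)+\frac{L}{2(\alpha+1)}\,E'(L)+R(L)=0,\qquad R(L):=\int_{\partial B_L}\Big(\tfrac12|u|^2u+qu\Big)\cdot\nu\,\mathrm{d}S .
\end{equation}
This is the local energy identity in differential form; after multiplying by $2(\alpha+1)$ it reads $L\,E'(L)+(2\alpha-N)E(L)=-2(\alpha+1)R(L)$, equivalently $\frac{\mathrm{d}}{\mathrm{d}L}\big(L^{2\alpha-N}E(L)\big)=-2(\alpha+1)L^{2\alpha-N-1}R(L)$, whose homogeneous solution is exactly $L^{N-2\alpha}$, the profile appearing in \eqref{eq:conc}.

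The heart of the matter is to show that the flux $R(L)$ is a genuine lower-order perturbation of $L\,E(L)$, so that the homogeneous scaling persists. I would not estimate $R(L)$ on a single sphere but integrate the ODE over a dyadic shell $L\le|y|\le2L$, turning the spherical fluxes into volume integrals via the coarea formula, $\int_L^{2L}R(s)\,\mathrm{d}s=\int_{L\le|y|\le2L}(\tfrac12|u|^2u+qu)\cdot\nu\,\mathrm{d}y$. The cubic part is harmless because $\int_{L\le|y|\le2L}|u|^3\,\mathrm{d}y\le(\sup_{|y|\sim L}|u|)\,E(2L)\lesssim L^{\delta}E(2L)$, which is of lower order than $L\,E(L)$ since $\delta<1$. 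The pressure part is the delicate one, controlled by invoking Lemma \ref{lem:pq}: the Calder\'on--Zygmund integral together with the correction $\bar q$ is built precisely so that, away from the linear term $A\cdot y$, the profile $q$ grows essentially like $|u|^2$, and for $\delta\ge\tfrac12$ the extra gradient term $y\cdot\nabla K_{ij}$ subtracted in the second branch of $\bar q$ supplies the additional decay needed to make the shell flux $\int_{L\le|y|\le2L}|q||u|\,\mathrm{d}y$ of lower order than $L\,E(L)$. Feeding these bounds into the integrated ODE and bootstrapping from the crude a priori bound $E(L)\lesssim L^{N+2\delta}$, immediate from \eqref{eq:asum2}, makes $s^{2\alpha-N-1}R(s)$ integrable, so $L^{2\alpha-N}E(L)$ converges to a finite limit; combined with the matching lower bound this forces $E(L)\sim L^{N-2\alpha}$, i.e.\ \eqref{eq:conc}.

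With the growth law \eqref{eq:conc} in hand, both assertions follow by comparing exponents. In case (1) the two-sided asymptotics \eqref{eq:asum2}--\eqref{eq:asum3} give $L^{N+2\epsilon_0}\lesssim E(L)\lesssim L^{N+2\delta}$; matching with $E(L)\sim L^{N-2\alpha}$ forces $N+2\epsilon_0\le N-2\alpha\le N+2\delta$, that is $-\delta\le\alpha\le-\epsilon_0$, and shows that any $\alpha$ outside this interval admits only the trivial profile. In case (2), the lower bound \eqref{eq:asum4} upgrades the energy lower bound to $E(L)\gtrsim L^{N}$, whence $N\le N-2\alpha$, i.e.\ $\alpha\le0$, while the upper bound again gives $\alpha\ge-\delta$; the hypotheses $\delta<\tfrac12$ and $\alpha>-\tfrac12$ keep us in the first branch of the pressure formula (where $\bar q$ is a constant) and in the regime where the slowly growing, possibly linear, pressure contribution stays strictly below the $L^{N-2\alpha}$ scale, so the flux estimates above close.

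I expect the main obstacle to be precisely the pressure flux $\int_{\partial B_L}qu\cdot\nu\,\mathrm{d}S$: unlike the cubic term it cannot be bounded by $E$ alone, and the slow, potentially linear (through $A\cdot y$) growth of $q$ for non-decaying $u$ is exactly what the refined representation \eqref{eq:qy0} is designed to tame. Verifying that this contribution remains strictly lower order than $L^{N-2\alpha}$ throughout the claimed $\alpha$-interval—which is why the two branches of the pressure formula and the extra restrictions $\delta<\tfrac12$, $\alpha>-\tfrac12$ in (2) appear—is where the delicate analysis lies; once it is in place, the remaining steps are bookkeeping in the energy ODE.
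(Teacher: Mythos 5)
Your starting point (the local energy balance over $B_L$, in differential or dyadic-shell form) and your two main ingredients (the pressure representation of Lemma \ref{lem:pq} and a bootstrap from the crude bound $E(L)\lesssim L^{N+2\delta}$) are the same as the paper's, which works from the integrated local energy inequality \eqref{eq:locEne2} and the pressure estimate of Lemma \ref{lem:pres}. However, two steps where you assert the conclusion are precisely where the real work lies, and as stated they do not go through.

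First, a single application of the crude bound does \emph{not} make the flux term summable. With $E(L)\lesssim L^{N+2\delta}$, Lemma \ref{lem:pres} gives $\int_{|y|\sim L}|q||u|\lesssim L^{N+3\delta}+L^{N+\delta+1}$, so the dyadic contribution to your integrated ODE is of order $L^{2\alpha+3\delta-1}+L^{2\alpha+\delta}$, which fails to be summable over dyadic scales whenever $\alpha>-\tfrac{3\delta-1}{2}$ or $\alpha>-\tfrac{\delta}{2}$ --- i.e.\ on most of the claimed admissible interval $[-\delta,-\epsilon_0]$ when $\delta$ is not small. The paper resolves this by iterating the improvement an unbounded number of times (the cascades \eqref{eq:keyeq3} and \eqref{eq:keyeq4}, with the number of steps depending on which interval $]\frac{m+1}{m+3},\frac{m+2}{m+4}]$ contains $\delta$), and by a separate device to remove the logarithmic losses that appear at the borderline exponents. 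This iteration is not a technicality: the dichotomy at each stage (either the exponent drops below $N-2\alpha$ and you stop, or you must iterate again) is exactly what produces the restriction on $\alpha$. Second, your derivation of the lower bound in \eqref{eq:conc} is circular: convergence of $L^{2\alpha-N}E(L)$ to a finite limit only yields the upper bound, since the limit may be zero, and the hypotheses only supply $E(L)\gtrsim L^{N+2\epsilon_0}$ (case (1)) or $E(L)\gtrsim L^{N}$ (case (2)), both strictly weaker than $L^{N-2\alpha}$ in the interior of the admissible range. The paper proves $E(L)\gtrsim L^{N-2\alpha}$ by contradiction: if it fails along a sequence $L_k\to\infty$, taking $l_2=L_k$ in \eqref{eq:locEne2} kills the boundary term at infinity and leaves the tail estimate \eqref{eq:estL2}, which after another full iteration forces $E(L)\lesssim L^{N+\epsilon_0}$, contradicting \eqref{eq:fact0}. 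You need to supply both of these arguments for the proof to close.
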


In the proof of Theorem \ref{thm:SS}, we start with the following local energy inequality of the profiles $(u,q)$ (cf. \cite{ChaS})
for $0<l_1< l_2$ and the standard test function $\phi$,
\begin{equation}\label{eq:locEne}
\begin{split}
  \Big|\frac{1}{l_2^{N-2\alpha}}\int_{|y|\leq l_2} |u(y)|^2 \phi\Big(\frac{y}{l_2}\Big)\mathrm{d}y -
  \frac{1}{l_1^{N-2\alpha}}\int_{|y|\leq l_1}|u(y)|^2 \phi\Big(\frac{y}{l_1}\Big)\mathrm{d}y\Big|
  \leq\, C \int_{l_1/2 \leq |y|\leq l_2} \frac{|u|^3 + |q| |u|}{|y|^{N-2\alpha+1}} \mathrm{d}y,
\end{split}
\end{equation}
and by applying the bootstrapping method and a careful analysis according to the values of $(\alpha,\delta)$, we show the main results, which is placed in Section \ref{sec:thm:SS}.
In this process, the treating of the term containing pressure profile is technical and is used repeatedly, and we write it as Lemma \ref{lem:pres} for convenience, which is stated and proved in Section \ref{sec:lem}.

\begin{remark}
  In Theorem \ref{thm:SS}-(1), the assumption \eqref{eq:asum3} can be replaced by that for every $0<\epsilon_0\ll \delta$,
\begin{equation*}
  \int_{|y|\leq L} |u(y)|^2\,\mathrm{d}y \gtrsim L^{N+2\epsilon_0},\quad \forall L\gg1,
\end{equation*}
and the same conclusion can be obtained. In Theorem \ref{thm:SS}-(2), the assumption \eqref{eq:asum4} can be weakened to be that for every $0<\epsilon_0\ll 1$,
\begin{equation*}\label{eq:assum5}
  \int_{|y|\leq L} |u(y)|^2\,\mathrm{d}y \gtrsim L^{N-2+2\epsilon_0},\quad \forall L\gg1,
\end{equation*}
then the possible range of $\alpha$ admitting nontrivial profiles is $-\delta\leq \alpha \leq 1-\epsilon_0$, and the profiles corresponding to each $\alpha$ satisfy \eqref{eq:conc}.
\end{remark}

\begin{remark}
From \eqref{eq:conc} for every $-1<\alpha<\frac{N}{2}$ (cf. \cite{BroS} for the case $0<\alpha<\frac{N}{2}$ and cf. Theorem \ref{thm:SS} for the case $-1< \alpha \leq 0$),
we can expect the ``typical" possible velocity profiles are that
\begin{equation}\label{eq:TypVel}
  |u(y)| \sim |y|^{-\alpha} + l.o.t.,\quad \forall |y|\gg1,\,
\end{equation}
where $l.o.t.$ is the abbreviation of the lower order terms.
By scaling, we can also expect the typical vorticity profiles are
\begin{equation}\label{eq:typVor}
  |\nabla\times u(y)| \sim |y|^{-\alpha-1}+l.o.t.,\quad \forall |y|\gg1, \,
\end{equation}
which is compatible with the nonexistence result based on the vorticity profile of \cite{ChaS}.
Furthermore, in the considered blowup scenario and using \eqref{eq:typVor}, we have that for all
$(t,x)\in ]0,T[\times \big(B_\rho(x_0)\setminus\{x_0\}\big)$,
\begin{equation}
  \nabla\times v(x,t)= \frac{1}{T-t} \nabla\times u\bigg(\frac{x-x_0}{(T-t)^{\frac{1}{1+\alpha}}}\bigg)
  \sim  \frac{1}{|x-x_0|^{1+\alpha}}.
\end{equation}
Such typical self-similar blowup case is consistent with the Beale-Kato-Majda criterion (cf. \cite{BKM}) since for all $\alpha>-1$,
\begin{equation*}
  \int_0^T \|\nabla\times v\|_{L^\infty}\,\mathrm{d}t \sim T
  \sup_{0<|x-x_0|\leq \rho}|x-x_0|^{-(1+\alpha)}=\infty.
\end{equation*}
On the other hand, the bound $\int_0^T\|\nabla\times v\|_{L^r}\,\mathrm{d}t<\infty$ with some $1\leq r<\infty$
is not sufficient to get rid of such typical blowup scenario \eqref{eq:TypVel}-\eqref{eq:typVor} for all $\alpha>-1$; indeed,
this typical blowup scenario still may happen at the range $-1<\alpha<-1+N/r$.
\end{remark}



Throughout this paper, $C$ stands for a constant which may be different from line to line,
$X\lesssim Y$ means that there is a harmless constant $C$ such that $X\leq C Y$, and $X\sim Y$ means that $X\lesssim Y$ and $Y\lesssim X$ simultaneously.
Denote $B_r(x):=\{y\in \mathbb{R}^N: |y-x|\leq r\}$ the ball of $\mathbb{R}^N$
and $B^c_r(x):=\mathbb{R}^N\setminus B_r(x)$ its complement set. For a number $a\in \mathbb{R}$, denote $[a]$ by the interger part of $a$.

\section{Auxiliary lemmas of the pressure profile}\label{sec:lem}

We collect two auxiliary lemmas in this section: one is about the justification of the representation formula of pressure, and the other is about the estimating the term containing pressure profile, which is useful in the main proof.

\begin{lemma}\label{lem:pq}
  Suppose $v$ is a locally self-similar solution \eqref{eq:vpSelf} to the Euler equations and $\alpha>-1$.
Assume that $u\in C^3_{\mathrm{loc}}(\mathbb{R}^N)$ satisfies that for some $\delta\in [0,1[$,
\begin{equation}\label{eq:ucond}
  |u(y)|\lesssim |y|^\delta,\quad \forall |y|\geq M,
\end{equation}
with $M>0$ a large number,
then the corresponding pressure on the ball $B_{\rho}(x_0)$ for all $t$ near $T$ is expressed as
\begin{equation}\label{eq:keyeq2}
  p(x,t)=\frac{1}{(T-t)^{\frac{2\alpha}{\alpha+1}}} q\bigg(\frac{x-x_0}{(T-t)^{\frac{1}{\alpha+1}}}\bigg) +d(t),
\end{equation}
where $d(t)$ is a function depending only on $t$ satisfying \eqref{eq:dt}, and $q(y)$ is a $C^2_{\textrm{loc}}$-smooth scalar function defined by
\begin{equation}\label{eq:qy}
  q(y)= -\frac{1}{N} |u(y)|^2 + p.v.\int_{\mathbb{R}^N} K_{ij}(y-z) u_i(z) u_j(z)\,\mathrm{d}z + \bar{q}(y) + A\cdot y
\end{equation}
with $A\in\mathbb{R}^N$ some fixed constant vector (especially, $A$ equals 0 if $\alpha>-\frac{1}{2}$, $\delta<1/2$ or $\alpha>-\frac{1}{2}$, $u\in L^p(\mathbb{R}^N)$, $p\in [2,\infty[$) and $\bar{q}(y)$ given by
\begin{equation}
  \begin{cases}
    0,\quad & \textrm{if}\;\; u\in L^p(\mathbb{R}^N), p\in [2,\infty[, \\
    -\int_{|z|\geq M}K_{ij}(z) u_i(z) u_j(z)\,\mathrm{d}z,\quad & \textrm{if}\;\;1\lesssim |u(z)| \lesssim |z|^\delta,\delta\in [0,\frac{1}{2}[,\,\forall |z|\geq M,  \\
    -\int_{|z|\geq M} \big(K_{ij}(z)+ y\cdot\nabla K_{ij}(z)\big)u_i(z) u_j(z)\,\mathrm{d}z ,\quad & \textrm{if}\;\; |z|^{\frac{1}{2}}\lesssim |u(z)|\lesssim |z|^\delta,\delta\in[\frac{1}{2},1[,\forall |z|\geq M.
  \end{cases}
\end{equation}

\end{lemma}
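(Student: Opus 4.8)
The plan is to split the statement into two assertions: (i) the pressure, restricted to $B_\rho(x_0)$, necessarily has the self-similar shape \eqref{eq:keyeq2} with \emph{some} profile determined up to an additive function of $t$; and (ii) that profile agrees, up to a constant, with the explicit expression \eqref{eq:qy}. For (i) I would work with the momentum equation rather than the pressure Poisson equation, since this sidesteps the nonlocality of $p$. On $B_\rho(x_0)$ both $v$ and $p$ are smooth, so $\nabla p=-(\partial_t v+v\cdot\nabla v)$ holds pointwise; substituting \eqref{eq:vpSelf} and computing the derivatives, the two terms on the right share the common prefactor $(T-t)^{-(2\alpha+1)/(\alpha+1)}$, whence
\[
\nabla_x p(x,t)=-\frac{1}{(T-t)^{(2\alpha+1)/(\alpha+1)}}\,G(y),\qquad G:=\tfrac{\alpha}{\alpha+1}u+\tfrac{1}{\alpha+1}\,y\cdot\nabla u+u\cdot\nabla u,
\]
with $y=(x-x_0)(T-t)^{-1/(\alpha+1)}$. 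Thus $\nabla_x p$ is itself self-similar, and integrating in $x$ yields $p(x,t)=(T-t)^{-2\alpha/(\alpha+1)}\tilde q(y)+d(t)$, the $x$-independent integration constant being the function $d(t)$. Since the ansatz holds on $B_\rho(x_0)$ for all $t$ near $T$ and the rescaled balls $B_{\rho(T-t)^{-1/(\alpha+1)}}(0)$ exhaust $\mathbb{R}^N$ as $t\uparrow T$, the relation $\nabla\tilde q=-G$, hence $\tilde q$, extends to all of $\mathbb{R}^N$, with $\tilde q\in C^2_{\mathrm{loc}}$ because $u\in C^3_{\mathrm{loc}}$. Using $\mathrm{div}\,u=0$ one checks $\mathrm{div}\,G=\partial_i\partial_j(u_iu_j)$, so $\tilde q$ solves $\Delta\tilde q=-\partial_i\partial_j(u_iu_j)$ on $\mathbb{R}^N$.

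For (ii) I would construct the candidate $q$ from \eqref{eq:qy}, read through its convergent decomposition: the $\mathrm{p.v.}$ integral together with $\bar q$ is split into the near piece $\int_{|z|\le M}K_{ij}(y-z)u_iu_j\,\mathrm{d}z$ and a far piece from which one or two leading Taylor terms of $z\mapsto K_{ij}(y-z)$ are subtracted. Homogeneity of degree $-N$ gives $K_{ij}(y-z)-K_{ij}(z)=O(|y|\,|z|^{-N-1})$ and, after one further term, $O(|y|^2|z|^{-N-2})$ for $|z|\gg|y|$; paired with $|u|^2\lesssim|z|^{2\delta}$ this produces absolute convergence exactly for $\delta<\tfrac12$ with one subtraction and for $\delta<1$ with two, which is precisely the dichotomy defining $\bar q$. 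The near piece plus the $-\tfrac1N|u|^2$ term is controlled by second-order Calder\'on--Zygmund/Schauder estimates, upgrading $u\in C^3_{\mathrm{loc}}$ to $q\in C^2_{\mathrm{loc}}$; and since the subtracted corrections are affine in $y$, hence harmonic, the volume-potential computation underlying \eqref{eq:pexp} shows that $q$ again solves $\Delta q=-\partial_i\partial_j(u_iu_j)$ on $\mathbb{R}^N$.

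It remains to match the two profiles. The difference $w:=\tilde q-q$ is harmonic on $\mathbb{R}^N$. The decomposition shows $|q(y)|\lesssim|y|^{2\delta}$, which is subquadratic for $\delta<1$; a parallel (and more delicate) estimate based on $\nabla\tilde q=-G$ and \eqref{eq:ucond} bounds $\tilde q$ sub-quadratically as well, so by the Liouville theorem $w$ is a polynomial of degree at most one, $w(y)=A\cdot y+c$. Absorbing $c$ into $d(t)$ and the linear part into the term $A\cdot y$ of \eqref{eq:qy} gives exactly \eqref{eq:keyeq2}--\eqref{eq:qy}; in the favorable regimes ($\alpha>-\tfrac12$ with $\delta<\tfrac12$, or $u\in L^p$ with $p\in[2,\infty[$) the growth of $\nabla q$ and of $G=-\nabla\tilde q$ is sublinear, so the harmonic field $\nabla w$ is sublinear and therefore constant equal to $0$, giving $A=0$. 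I expect the main obstacle to lie in part (ii): on the one hand, fixing the correct Taylor order on each $\delta$-range and verifying that the subtracted pieces are harmonic so the Poisson equation is preserved; on the other, securing \emph{matched} sub-quadratic growth bounds on both $\tilde q$ and $q$ from the single hypothesis \eqref{eq:ucond}, so that the Liouville rigidity pins the harmonic remainder down to $A\cdot y$ and correctly detects when $A=0$.
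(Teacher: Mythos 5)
Your overall architecture (momentum equation gives a self-similar pressure profile $\tilde q$ up to $d(t)$; the explicit formula \eqref{eq:qy} gives a second candidate solving the same Poisson equation; both are tempered distributions, so the difference is a harmonic polynomial whose degree must be pinned down by growth bounds) is exactly the paper's. The construction of the candidate, including the one-versus-two Taylor subtractions of $K_{ij}$ according to $\delta<\tfrac12$ or $\delta\in[\tfrac12,1[$ and the observation that the subtracted affine-in-$y$ terms are harmonic, also matches the paper's treatment of $I_1$, $I_2$ and $\bar q$.

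The genuine gap is in your closing Liouville step. You propose to bound $\tilde q$ sub-quadratically ``based on $\nabla\tilde q=-G$ and \eqref{eq:ucond}'', but $G=\tfrac{\alpha}{\alpha+1}u+\tfrac{1}{\alpha+1}\,y\cdot\nabla u+u\cdot\nabla u$ contains $\nabla u$, and the hypothesis \eqref{eq:ucond} controls only $|u|$, not $|\nabla u|$, at infinity (under $u\in C^3_{\mathrm{loc}}$ with $|u|\lesssim|y|^\delta$, $\nabla u$ may grow arbitrarily fast). The drift term is harmless after integrating along rays, since $\int_0^1 y\cdot\bigl(sy\cdot\nabla u(sy)\bigr)\mathrm{d}s=y\cdot u(y)-\int_0^1 y\cdot u(sy)\,\mathrm{d}s$, but $\int_0^1 y\cdot\bigl(u\cdot\nabla u\bigr)(sy)\,\mathrm{d}s$ is not a total $s$-derivative and cannot be estimated from \eqref{eq:ucond} alone; so the claimed sub-quadratic bound on $\tilde q$ is not available by this route. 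A symptom of the problem is that your criterion for $A=0$ depends only on $\delta$, whereas the lemma requires $\alpha>-\tfrac12$. The paper closes this step differently: it invokes the global Calder\'on--Zygmund representation \eqref{eq:pexp} of the \emph{actual} pressure of the finite-energy solution $v$, rescales it to write $(T-t)^{\frac{2\alpha}{1+\alpha}}p(y(T-t)^{\frac{1}{1+\alpha}},t)=I(y)-\tilde I(y,t)+\tilde p(y,t)=q(y)+d(t)$ on $|y|\le\tfrac{\rho}{2}(T-t)^{-\frac{1}{1+\alpha}}$, and bounds $|\tilde p|\lesssim (T-t)^{\frac{2\alpha}{1+\alpha}}\|v\|_{L^2}^2$ and $|\tilde I|\lesssim (T-t)^{-\frac{2\delta}{1+\alpha}}$; comparing these rates with $|y|^2\sim(T-t)^{-\frac{2}{1+\alpha}}$ forces $\deg h\le 1$, and comparing with $|y|\sim(T-t)^{-\frac{1}{1+\alpha}}$ yields $A=0$ precisely when $\alpha>-\tfrac12$ and $\delta<\tfrac12$. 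That use of the exterior energy of $v$ is the missing ingredient in your plan.
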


\begin{proof}[Proof of Lemma \ref{lem:pq}]
We here mainly adapt the strategy in the proof of \cite[Lemma 2.1]{BroS} with suitable modification.

First define the quantity contained in \eqref{eq:qy} as
\begin{equation}\label{eq:Iy}
  I(y)= -\frac{1}{N} |u(y)|^2 + p.v.\int_{\mathbb{R}^N} K_{ij}(y-z) u_i(z) u_j(z)\,\mathrm{d}z + \bar{q}(y),\quad
\end{equation}
and we show that $I(y)$ is meaningful and is a tempered distribution. Let $\phi_0\in \mathcal{D}(\mathbb{R}^N)$ satisfying $0\leq \phi_0\leq 1$ be a cutoff function supported on
$B_1(0)$ such that $\phi_0\equiv 1$ on $B_{1/2}(0)$. For any $L\geq M$, set
$\phi_L(z)=\phi_0(z/L)$, then we have
\begin{equation*}
\begin{split}
  p.v.\int_{\mathbb{R}^N} K_{ij}(y-z) u_i(z) u_j(z) \,\mathrm{d}z
  =\, & p.v. \int_{\mathbb{R}^N} K_{ij}(y-z) \phi_{4L}(z) u_i(z) u_j(z)\,\mathrm{d}z\\
  & + \int_{\mathbb{R}^N} K_{ij}(y-z) (1-\phi_{4L}(z)) u_i(z) u_j(z)\,\mathrm{d}z \\
  :=\, &  I_{1}(y,L) + I_2(y,L).
\end{split}
\end{equation*}
Since $u\in C_{\mathrm{loc}}^3(\mathbb{R}^3)$, from the Besov embedding, we infer that $I_1(y,L)\in C^\beta$ for all $\beta<3$.
We next consider $I_2(y,L)+ \bar q(y)$ acting on the ball $B_L(0)$,
and if $u\in L^p(\mathbb{R}^N)$ for some $p\in [2,\infty[$, then
\begin{equation}\label{eq:I2yL}
\begin{split}
  I_2(y,L) \lesssim  \sum_{k=0}^\infty \int_{2^k L\leq |z|\leq 2^{k+1} L} \frac{1}{|z|^N} |u(z)|^2 \mathrm{d}z
  \lesssim  \sum_{k=0}^\infty (2^k L)^{-N+ N(1- 2/p)} \|u\|_{L^p}^{2/p} \lesssim L^{-2N/p},
\end{split}
\end{equation}
and if $1\lesssim |u(z)| \lesssim |z|^\delta$, $\delta\in ]0,\frac{1}{2}[$ for all $ |z|\geq M$, then
\begin{equation}\label{eq:I2key}
\begin{split}
  I_2(y,L)+ \bar q(y)\leq & \,\Big|\int_{|z|\geq 4L}\big(K_{ij}(y-z)-K_{ij}(z)\big) u_i(z) u_j(z)\mathrm{d}z\Big| +  C\int_{M\leq |z|\leq 4L} \frac{1}{|z|^N} |u(z)|^2\,\mathrm{d}z \\
  \lesssim & \int_{|z|\geq 2L} \frac{|y|}{|z|^{N+1}} |u(z)|^2\,\mathrm{d}z+ \int_{M\leq |z|\leq 4 L} |z|^{-N+2\delta} \mathrm{d}z\lesssim L^{2\delta},
\end{split}
\end{equation}
and if $|z|^{1/2}\lesssim |u(z)| \lesssim |z|^\delta$, $\delta\in [1/2,1[$ for all $ |z|\geq M$, then
\begin{equation}\label{eq:I2key2}
\begin{split}
  I_2(y,L)+ \bar q(y) \leq & \Big|\int_{|z|\geq 4L} \big(K_{ij}(y-z)-K_{ij}(z)-y\cdot\nabla K_{ij}(z)\big) u_i(z) u_j(z)\mathrm{d}z\Big| \\
  & \,+\, C\int_{M\leq |z|\leq 4 L} \Big( \frac{1}{|z|^N} + \frac{|y|}{|z|^{N+1}}\Big) |u(z)|^2 \mathrm{d}z \\
  \lesssim & \int_{|z|\geq 2L} \frac{|y|^2}{|z|^{N+2}} |u(z)|^2\,\mathrm{d}z+ \int_{|z|\sim L} \Big(\frac{1}{|z|^{N-2\delta}}+ \frac{|y|}{|z|^{N+1-2\delta}}\Big) \mathrm{d}z\lesssim L^{2\delta}.
\end{split}
\end{equation}
For $s=1,2$ and for all $y\in B_L(0)$, we also get that if $u\in L^p(\mathbb{R}^N)$, $p\in[2,\infty[$,
\begin{equation*}
\begin{split}
  \partial_y^s(I_2(y,L)) \lesssim \sum_{k=0}^\infty \int_{2^k L\leq |z|\leq 2^{k+1}L} \frac{1}{|z|^{N+s}} |u(z)|^2\,\mathrm{d}z \lesssim L^{-s-\frac{2N}{p}},
\end{split}
\end{equation*}
and if $1\lesssim |u(z)| \lesssim |z|^\delta$, $\delta\in ]0,1/2[$, $\forall |z|\geq M$,
\begin{equation*}
\begin{split}
  \partial^s_y \big(I_2(y,L)+ \bar{q}(y)\big)  &\leq \partial_y^s \Big(\int_{|z|\geq 4L} \int_0^1 y\cdot \nabla K_{ij}(\tau y-z) u_i(z)u_j(z)\,\mathrm{d}\tau\mathrm{d}z  \Big)
  + \int_{|z|\sim L} \frac{C }{|z|^{N+s}} |u(z)|^2\,\mathrm{d}z \\
  &\lesssim \int_{|z|\geq 2L} \frac{|y|}{|z|^{N+1+s}} |u(z)|^2\,\mathrm{d}z + \int_{|z|\sim L} \frac{1 }{|z|^{N+s}} |u(z)|^2\,\mathrm{d}z
  \lesssim L^{-s+2\delta},
\end{split}
\end{equation*}
and if $|z|^{1/2}\lesssim |u(z)| \lesssim |z|^\delta$, $\delta\in [1/2,1[$, $\forall |z|\geq M$,
\begin{equation*}
\begin{split}
  \partial^s\big(I_2(y,L)+ \bar q(y)\big) \leq &\partial^s_y\bigg(\int_{|z|\geq 4L} \int_0^1 \int_0^1 \Big(y\cdot \nabla^2 K_{ij}(\tau\theta y -z)\cdot y\Big)
  u_i(z) u_j(z)\tau\mathrm{d}\theta\mathrm{d}\tau\mathrm{d}z\bigg)  \\
  &  - \partial_y^s \bigg(\int_{M\leq |z|\leq 4L} y\cdot\nabla K_{ij}(z) u_i(z)u_j(z) \,\mathrm{d}z\bigg)+ C\int_{|z|\sim L} \frac{1}{|z|^{N+s}} |u(z)|^2 \mathrm{d}z   \\
  \lesssim & \int_{|z|\geq 2L} \frac{|y|^2}{|z|^{N+2+s}} |u(z)|^2\,\mathrm{d}z+ \int_{|z|\sim L} \frac{1}{|z|^{N+s}} |u(z)|^2 \mathrm{d}z
  \\ & \, +
  \begin{cases}
    \int_{M\leq |y|\leq 4L} \frac{1}{|z|^{N+1}} |u(z)|^2\,\mathrm{d}z,\quad & \textrm{if}\;\; s=1, \\
    0, \quad & \textrm{if}\;\; s=2,
  \end{cases} \\
  \lesssim &
    L^{-s+2\delta}.
\end{split}
\end{equation*}
Hence the scalar function $I(y)$ defined by \eqref{eq:Iy}
is $C^2$-smooth on $B_L(0)$.
Moreover, for all $y\in B_L(0)$, we have
\begin{equation*}
\begin{split}
  \Delta I & \,= \Delta \Big(-\frac{1}{N} |u|^2\phi_{4L}+I_1\Big) + \Delta \Big(I_2 + \bar{q}(y)\Big)\\
  & \,= -\mathrm{div}\, \mathrm{div}\big(u\sqrt{\phi_{4L}}\otimes u\sqrt{\phi_{4L}}\big)
  = -\mathrm{div}\, \mathrm{div}\big(u\otimes u\big),
\end{split}
\end{equation*}
where in the second line $\Delta( I_2+ \bar{q}(y))=0$ due to that the term $K_{ij}(y-z)-K_{ij}(z)-y\cdot\nabla K_{ij}(z)$ is harmonic in the $y$-variable for all
$y\in B_L(0)$ and $z\in B_{2L}^c(0)$.
Besides, it is not hard to show that $I$ is a tempered distribution, which can be seen from the following computation that for $L\gg1$ and some $r>2$,
\begin{equation*}
  \int_{|y|\leq L} |I_1(y,L)|^{\frac{r}{2}}\,\mathrm{d}y \lesssim
  \int_{|z|\leq 4L} |u(z)|^r\,\mathrm{d}z \lesssim L^{N+r\delta},
\end{equation*}
and by \eqref{eq:I2yL}-\eqref{eq:I2key2},
\begin{equation*}
\begin{split}
  \int_{|y|\leq L} |I_2(y,L)+\bar q(y)|^{\frac{r}{2}}\,\mathrm{d}y   \lesssim L^{N+ r\delta}.
\end{split}
\end{equation*}

Next we intend to find a distributional pressure profile solving the first equation of \eqref{EulerSelf}, i.e.,
\begin{equation}\label{eq:keyeq}
  \frac{\alpha}{\alpha+1} u +\frac{1}{\alpha+1}y\cdot\nabla u + u\cdot\nabla u + \nabla q=0,
\end{equation}
Applying the ansatz \eqref{eq:vpSelf} to the Euler equations \eqref{Euler}, and by setting
\begin{equation}
  y:=\frac{x-x_0}{(T-t)^{\frac{1}{1+\alpha}}},\quad p(x,t):= \bar p(y,t),
\end{equation}
we obtain that for all $|y|\leq \rho(T-t)^{-\frac{1}{1+\alpha}}$,
\begin{equation}\label{eq:ueq2}
  \frac{\alpha}{1+\alpha} u(y) + \frac{1}{1+\alpha} y\cdot\nabla_y u(y) + u\cdot\nabla_y u(y)
  + \nabla_y \Big((T-t)^{\frac{2\alpha}{1+\alpha}}\bar p(y,t)\Big)=0.
\end{equation}
For any fixed $t<T$, denoting $f(y,t)= (T-t)^{\frac{2\alpha}{1+\alpha}} \bar p(y,t)$, then the vector-valued function $\nabla_y f(y,t)=:g(y)$ depends only on $y$
on the domain $D(t):=\{y: |y|\leq \rho (T-t)^{-\frac{1}{1+\alpha}}\}$. Thus from the fundamental theorem of calculus,
we deduce that for all $y\in D(t)$,
\begin{equation}\label{eq:qexp}
\begin{split}
  & (T-t)^{\frac{2\alpha}{1+\alpha}}\bar p(y,t)-(T-t)^{\frac{2\alpha}{1+\alpha}}\bar p(0,t) \\
  = & \,f(y,t) -f(0,t)= \int_0^1 \frac{d}{ds} f(sy,t)\,\mathrm{d}s = \int_0^1 y\cdot \nabla f(sy,t)\mathrm{d}s \\
  = & \int_0^1 y\cdot g(sy) \,\mathrm{d}s =: q(y),
\end{split}
\end{equation}
that is,
\begin{equation}\label{eq:pres2}
  p(x,t)= \frac{1}{(T-t)^{\frac{2\alpha}{1+\alpha}}} q\bigg(\frac{x-x_0}{(T-t)^{\frac{1}{1+\alpha}}}\bigg) + c(t),\quad \forall \,x\in B_\rho(x_0),
\end{equation}
with $c(t)=p(x_0,t)$. Inserting \eqref{eq:pres2} into \eqref{eq:ueq2} yields the equation \eqref{eq:keyeq} on $\mathbb{R}^N$.
In a similar deduction as in \cite[Lemma 2.1]{BroS}, we can prove that $q(y)$ is a tempered distribution.

Now we show that $q$ and $I$ are equal up to a first-order harmonic polynomial.
Since they both satisfy the Laplace equation $\Delta I =-\mathrm{div}\mathrm{div}(u\otimes u)= \Delta q$,
and are both tempered distributions on $\mathbb{R}^N$, the difference
\begin{equation}\label{eq:qi1}
  q - I=:h
\end{equation}
is a harmonic polynomial. In the following we prove the order of $h$ is at most one.
For all $|y|\leq \frac{\rho}{2(T-t)^{\frac{1}{1+\alpha}}}$, from \eqref{eq:pexp} and \eqref{eq:vpSelf} we have
\begin{equation}\label{eq:qi2}
\begin{split}
  &(T-t)^{\frac{2\alpha}{1+\alpha}} p(y (T-t)^{\frac{1}{1+\alpha}},t)  = \\
   =\,& -\frac{1}{N} |u(y)|^2
  + p.v. \int_{|z|\leq \rho} K_{ij}\big(y(T-t)^{\frac{1}{1+\alpha}}-z\big)\;(u_i u_j)
  \Big(\frac{z}{(T-t)^{\frac{1}{1+\alpha}}}\Big)\mathrm{d}z \\
  & \,+ (T-t)^{\frac{2\alpha}{1+\alpha}} \int_{|z|\geq \rho} K_{ij}\big(y(T-t)^{\frac{1}{1+\alpha}}-z\big)
  (v_i v_j)(z,t)\,\mathrm{d}z \\
  = \, & -\frac{1}{N} |u(y)|^2
  + p.v. \int_{|z|\leq \rho (T-t)^{-\frac{1}{1+\alpha}}} K_{ij}(y-z)(u_i u_j)\big(z\big)\mathrm{d}z + \tilde{p}(y,t), \\
  = \, &\,  I(y) -\tilde{I}(y,t) + \tilde{p}(y,t),
\end{split}
\end{equation}
with
\begin{equation*}
  \tilde{p}(y,t):= (T-t)^{\frac{2\alpha}{1+\alpha}} \int_{|z|\geq \rho} K_{ij}\big(y(T-t)^{\frac{1}{1+\alpha}}-z\big)
  (v_i v_j)(z,t)\,\mathrm{d}z,
\end{equation*}
and
\begin{equation*}
  \tilde{I}(y,t) := \int_{|z|\geq \rho(T-t)^{-\frac{1}{1+\alpha}}} K_{ij}(y-z) u_i(z) u_j(z)\,\mathrm{d}z + \bar q\big(y\big).
\end{equation*}
On the other hand, thanks to \eqref{eq:pres2}, we see that 
\begin{equation}\label{eq:qi3}
  (T-t)^{\frac{2\alpha}{1+\alpha}} p(y (T-t)^{\frac{1}{1+\alpha}},t)= q(y) + d(t)
\end{equation}
with $d(t):=(T-t)^{\frac{2\alpha}{1+\alpha}} c(t)$.
Hence, from \eqref{eq:qi1}-\eqref{eq:qi3}, we see that
\begin{equation*}
  |h(y)-d(t)| \leq |\tilde{p}(y,t)| + |\tilde{I}(y,t)|, \qquad \forall |y|\leq \frac{\rho}{2(T-t)^{\frac{1}{1+\alpha}}}.
\end{equation*}
For $\tilde{p}$, from the separation of $y(T-t)^{\frac{1}{1+\alpha}}$ and $z$, we obtain
\begin{equation*}
  |\tilde{p}(y,t)|\lesssim (T-t)^{\frac{2\alpha}{1+\alpha}} \|v\|_{L^2}^2\lesssim (T-t)^{\frac{2\alpha}{1+\alpha}}.
\end{equation*}
For $\tilde{I}$, thanks to \eqref{eq:I2yL}, \eqref{eq:I2key} and \eqref{eq:I2key2}, we get
\begin{equation*}
  |\tilde{I}(y,t)| \lesssim
  \begin{cases}
    (T-t)^{-\frac{2\delta}{1+\alpha}}, \quad &\textrm{if}\;\; 1\lesssim |u(z)|\lesssim |z|^\delta,\delta\in [0,1[, \\
    (T-t)^{\frac{2N}{p(1+\alpha)}}, \quad & \textrm{if}\;\; u\in L^p(\mathbb{R}^N),\,p\in[2,\infty[.
  \end{cases}
\end{equation*}
Since $\alpha>-1$, $\delta\in[0,1[$ and the above estimates hold for all $y\leq \frac{\rho}{2(T-t)^{1/(1+\alpha)}}$, we infer that the order of harmonic polynomial $h(y)$ is at most one and
\begin{equation}\label{eq:dt}
  |d(t)|\lesssim
  \begin{cases}
    1+(T-t)^{\frac{2\alpha}{1+\alpha}}+(T-t)^{-\frac{2\delta}{1+\alpha}}, \quad &\textrm{if}\;\; 1\lesssim |u(z)|\lesssim |z|^\delta,\delta\in [0,1[, \\
    1+(T-t)^{\frac{2\alpha}{1+\alpha}}+(T-t)^{\frac{2N}{p(1+\alpha)}}, \quad & \textrm{if}\;\; u\in L^p(\mathbb{R}^N),\,p\in[2,\infty[,
  \end{cases}
\end{equation}
which proves \eqref{eq:keyeq2}.
In particular, if $\alpha>-1/2$, $\delta\in[0,1/2[$ or $\alpha>-\frac{1}{2}$, $u\in L^p(\mathbb{R}^N)$ ($ p\in[2,\infty[$), moreover $h(y)$ is a uniform constant.
\end{proof}

\begin{lemma}\label{lem:pres}
  Assume that $u\in C^1_{\mathrm{loc}}(\mathbb{R}^N;\mathbb{R}^N)$ is a locally regular vector field.
Suppose $u$ additionally satisfies that
\begin{equation}
\begin{split}
  |u(y)|\lesssim |y|^\delta,\;\;\forall |y|\geq M, \quad&\textrm{with}\quad 0\leq \delta<1\quad\textrm{and}\\
  \int_{|y|\leq L} |u(y)|^2\,\mathrm{d}y \lesssim L^b,\;\; \forall L\geq M,\quad &\textrm{with}\quad 0\leq b\leq  N+2\delta,
\end{split}
\end{equation}
and $M>0$ a fixed number.
Let $q$ be a scalar field defined from $u$ by that for every $|y|\leq L$,
\begin{equation}
\begin{split}
  q(y)= &\, c_0 |u(y)|^2 + A\cdot y + p.v. \int_{\mathbb{R}^N} K_{ij}(y-z) u_i(z) u_j(z)\,\mathrm{d}z\, + \\
  & \,+
  \begin{cases}
  -\int_{|z|\geq M} K_{ij}(z) u_i(z) u_j(z)\,\mathrm{d}z, \quad& \textrm{if  }\delta\in [0,1/2[, \\
  -\int_{|z|\geq M} \big(K_{ij}(z)+y\cdot\nabla K_{ij}(z)\big) u_i(z) u_j(z)\,\mathrm{d}z, \quad & \textrm{if  }\delta\in[1/2,1[,
  \end{cases}
\end{split}
\end{equation}
with $c_0\in\mathbb{R}$, $A\in \mathbb{R}^N$ and $K_{ij}(z)$ ($i,j=1,\cdots,N$) some Calder\'on-Zygmund kernel, then we have
\begin{equation}\label{eq:estq2}
  \int_{|y|\leq L} |q(y)| |u(y)| \,\mathrm{d}y \lesssim
  \begin{cases}
    L^{b+\delta} +  L^{\frac{N+b}{2}+1},\quad & \textrm{if}\;\; (b,\delta)\neq (N+1,\frac{1}{2}),\\
    L^{\frac{N+3}{2}}[\log_2 L], \quad &\textrm{if}\;\; (b,\delta)=(N+1,\frac{1}{2}).
  \end{cases}
\end{equation}
In particular, if $\delta\in [0,\frac{1}{2}[$ and $A=0$, we also have
\begin{equation}\label{eq:estq3}
  \int_{|y|\leq L} |q(y)| |u(y)| \,\mathrm{d}y \lesssim
  \begin{cases}
    L^{b+\delta},\quad & \textrm{if}\;\; b\geq N-2\delta,(b,\delta)\neq (N,0), \\
    L^N [\log_2 L],\quad & \textrm{if}\;\; (b,\delta)=(N,0), \\
    L^{\frac{N+b}{2}} ,\quad & \textrm{if}\;\; b\leq N-2\delta,(b,\delta)\neq (N,0). \\
  \end{cases}
\end{equation}
\end{lemma}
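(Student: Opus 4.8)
The plan is to split $q$ according to its defining formula into three parts,
\[
q(y)=c_0|u(y)|^2 + A\cdot y + \Big(p.v.\int_{\mathbb{R}^N}K_{ij}(y-z)u_i u_j\,\mathrm{d}z+\bar q(y)\Big)=:q_1(y)+q_2(y)+q_3(y),
\]
estimate $\int_{|y|\le L}|q_k||u|$ for each piece, and add the results. The tools are elementary: Cauchy--Schwarz, the $L^2$-boundedness of the Calder\'on--Zygmund operator associated with $K_{ij}$, and the decomposition of the singular integral at scale $L$ from the proof of Lemma~\ref{lem:pq}. The single decisive principle throughout is that every kernel integral must be evaluated using the $L^2$-average hypothesis $\int_{|y|\le L}|u|^2\lesssim L^b$, with the pointwise growth $|u(y)|\lesssim|y|^\delta$ used only to extract one isolated power of $|z|^\delta$ at a time.

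The two elementary pieces are immediate and produce exactly the two displayed terms of \eqref{eq:estq2}. For $q_1$ I would simply write $\int_{|y|\le L}|u|^3\le\big(\sup_{|y|\le L}|u|\big)\int_{|y|\le L}|u|^2\lesssim L^{\delta}\cdot L^{b}=L^{b+\delta}$, using $\sup_{B_L}|u|\lesssim L^\delta$. For the linear part $q_2=A\cdot y$, Cauchy--Schwarz against the volume together with the $L^2$-average bound gives $\int_{|y|\le L}|A\cdot y|\,|u|\lesssim\big(\int_{|y|\le L}|y|^2\big)^{1/2}\big(\int_{|y|\le L}|u|^2\big)^{1/2}\lesssim L^{(N+2)/2}L^{b/2}=L^{(N+b)/2+1}$, which vanishes from the sharper bound \eqref{eq:estq3} precisely because there $A=0$.

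For the singular integral $q_3$ I would, at scale $L$, reuse the cutoff decomposition $p.v.\int K_{ij}(y-z)u_iu_j\,\mathrm{d}z=I_1(y,L)+I_2(y,L)$ from Lemma~\ref{lem:pq}. The local part $I_1(\cdot,L)$ is the Calder\'on--Zygmund operator applied to $\phi_{4L}u_iu_j$, so $\|I_1(\cdot,L)\|_{L^2}\lesssim\|\phi_{4L}u_iu_j\|_{L^2}\lesssim\big(\int_{|z|\le 4L}|u|^4\big)^{1/2}\lesssim L^{\delta+b/2}$ (spending one factor $L^{2\delta}$ pointwise and keeping the rest in $L^2$), whence $\int_{|y|\le L}|I_1|\,|u|\lesssim L^{\delta+b/2}\cdot L^{b/2}=L^{b+\delta}$. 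The far part $I_2(\cdot,L)+\bar q$ is handled through the Taylor-expansion identities already behind \eqref{eq:I2key}--\eqref{eq:I2key2}: $K_{ij}(y-z)-K_{ij}(z)$ (resp.\ its second-order remainder) is $O(|y|\,|z|^{-N-1})$ (resp.\ $O(|y|^2|z|^{-N-2})$) on $\{|z|\ge 2L\}$, and the remaining annular piece is $\int_{M\le|z|\lesssim L}\big(|z|^{-N}+|y|\,|z|^{-N-1}\big)|u|^2\,\mathrm{d}z$.

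The heart of the matter, and where the sharp estimate \eqref{eq:estq3} is won or lost, is this far term. Estimating the annular integrals with the pointwise bound $|u|^2\lesssim|z|^{2\delta}$ only yields $\|I_2(\cdot,L)+\bar q\|_{L^\infty(B_L)}\lesssim L^{2\delta}$, which after pairing with $\int_{|y|\le L}|u|\lesssim L^{(N+b)/2}$ overshoots the claim once $\delta>\tfrac12$ and never reaches the sharp $L^{(N+b)/2}$ of \eqref{eq:estq3}. The refinement I would insist on is to estimate each such integral by the $L^2$-average over dyadic shells, e.g.\ $\int_{M\le|z|\lesssim L}|z|^{-N}|u|^2\,\mathrm{d}z\lesssim\sum_{2^k\lesssim L}2^{k(b-N)}$, a geometric series that is $O(1)$ when $b<N$, $O(L^{b-N})$ when $b>N$, and $O(\log L)$ at the borderline $b=N$; the analogous tails behave the same way. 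Feeding this into $\int_{|y|\le L}|I_2+\bar q|\,|u|\lesssim\|I_2+\bar q\|_{L^\infty(B_L)}\,L^{(N+b)/2}$ and comparing exponents across the regimes $b\gtrless N$, $b\gtrless N-2\delta$, $b\gtrless N+2\delta$ reproduces the case distinctions of \eqref{eq:estq2}--\eqref{eq:estq3}; the genuinely $|y|$-linear contribution present for $\delta\in[\tfrac12,1)$ (from the $y\cdot\nabla K_{ij}$ term in $\bar q$) is what supplies the extra $L^{(N+b)/2+1}$ that \eqref{eq:estq2} allows, and the logarithms at $(b,\delta)=(N+1,\tfrac12)$ and $(N,0)$ emerge exactly when the relevant geometric series degenerates into $\sum 1\sim[\log_2 L]$ at the balance exponent $b=N+2-2\delta$ (resp.\ $b=N$). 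Thus the only real obstacle is recognizing that the pointwise growth must everywhere be traded for $L^2$-average decay; with that discipline all the bounds are routine.
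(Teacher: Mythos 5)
Your proposal is correct and follows essentially the same route as the paper: split $q$ into the cubic term, the linear term $A\cdot y$, the local Calder\'on--Zygmund part, and the Taylor-compensated far field, then bound the far field by summing the $L^2$-average hypothesis over dyadic shells and pairing with $\int_{|y|\le L}|u|\lesssim L^{(N+b)/2}$. The only (harmless) differences are cosmetic --- the paper splits the far field into a tail piece $q_3$ and an annular compensator $q_4$ and uses $L^{3/2}$--$L^3$ H\"older rather than $L^2$ boundedness for the local singular integral --- and you correctly identify both sources of the $L^{(N+b)/2+1}$ term and of the borderline logarithms.
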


\begin{proof}[Proof of Lemma \ref{lem:pres}]

We decompose $q(y)$ as
\begin{equation}\label{eq:qy-dec}
  q(y)= c_0 |u(y)|^2 + q_1(y, L) + q_2(y, L) + q_3 (y, L) + q_4(y, L)
\end{equation}
with
\begin{equation*}
\begin{split}
  & q_1(y, L)= A\cdot y, \quad\quad q_2(y, L) = p.v. \int_{|y|\leq 2L} K_{ij}(y-z) u_i(z) u_j(z) \,\mathrm{d}z, \\
  & q_3(y, L)=
  \begin{cases}
    \int_{|z|\geq 2 L} \big(K_{ij}(y-z)-K_{ij}(z)\big)u_i(z) u_j(z)\,\mathrm{d}z, \quad &\textrm{if}\;\; \delta\in [0,\frac{1}{2}[\\
    \int_{|z|\geq 2 L} \big(K_{ij}(y-z)-K_{ij}(z)-y\cdot\nabla K_{ij}(z)\big)u_i(z) u_j(z)\,\mathrm{d}z, \quad &\textrm{if}\;\; \delta\in [\frac{1}{2},1[,
  \end{cases}
  \\
  & q_4(y, L)=
  \begin{cases}
    -\int_{M\leq |z|\leq 2L} K_{ij}(z) u_i(z) u_j(z)\,\mathrm{d}z, \quad &\textrm{if}\;\; \delta\in[0,\frac{1}{2}[, \\
    -\int_{M\leq |z|\leq 2L} \big(K_{ij}(z) + y\cdot\nabla K_{ij}(z)\big)u_i(z) u_j(z)\,\mathrm{d}z, \quad &\textrm{if}\;\; \delta\in[\frac{1}{2},1[.
  \end{cases}
\end{split}
\end{equation*}
We first directly have $\int_{|y|\leq L}|u(y)|^3\,\mathrm{d}y\lesssim L^{b+\delta}$, and
\begin{equation*}
  \int_{|y|\leq L} |q_1(y,L)| |u(y)|\,\mathrm{d}y \leq |A| L^{N/2+1} \Big(\int_{|y|\leq L} |u(y)|^2\,\mathrm{d}y\Big)^{1/2} \lesssim |A| L^{\frac{N+b}{2}+ 1}.
\end{equation*}
For the term involving $q_2(y,L)$,
by the H\"older inequality and Calder\'on-Zygmund theorem, we get
\begin{equation*}
\begin{split}
  \int_{|y|\leq L}|q_2(y,L)| |u(y)|\,\mathrm{d}y & \leq \Big(\int_{|y|\leq L}|q_2(y,L)|^{\frac{3}{2}}\,\mathrm{d}y\Big)^{\frac{2}{3}}
  \Big( \int_{|y|\leq L} |u(y)|^3\,\mathrm{d}y\Big)^{\frac{1}{3}} \\
  & \lesssim \int_{|y|\leq 2L} |u(y)|^3\,\mathrm{d}y \lesssim L^{b+\delta}.
\end{split}
\end{equation*}
For the term containing $q_3(y,L)$, using the support property and the dyadic decomposition again, we infer that if $\delta\in [0,1/2[$,
\begin{equation*}
\begin{split}
  \int_{|y|\leq L} |q_3(y,L)| |u(y)|\,\mathrm{d}y & \lesssim L^{N+\delta} \sup_{|y|\leq L} |q_3(y,L)| \\
  & \lesssim L^{N+\delta} \sup_{|y|\leq L} \bigg(\sum_{k=1}^\infty \int_{2^k L\leq |z|\leq 2^{k+1}L}
  \frac{|y|}{|z|^{N+1}} |u(z)|^2\,\mathrm{d}z\bigg) \\
  & \lesssim L^{N+\delta+1} \sum_{k=1}^\infty \frac{1}{(2^k L)^{N+1}} \int_{|z|\sim 2^k L} |u(z)|^2\,\mathrm{d}z \\
  & \lesssim L^{N+\delta+1}\sum_{k=1}^\infty (2^k L)^{b-N-1}\lesssim L^{b+\delta},
\end{split}
\end{equation*}
and if $\delta\in [1/2,1[$,
\begin{equation*}
\begin{split}
  \int_{|y|\leq L} |q_3(y,L)| |u(y)|\,\mathrm{d}y & \lesssim L^{N+\delta} \sup_{|y|\leq L} |q_3(y,L)| \\
  & \lesssim L^{N+\delta} \sup_{|y|\leq L} \bigg(\sum_{k=1}^\infty \int_{2^k L\leq |z|\leq 2^{k+1}L}
  \frac{|y|^2}{|z|^{N+2}} |u(z)|^2\,\mathrm{d}z\bigg) \\
  & \lesssim L^{N+\delta+2} \sum_{k=1}^\infty \frac{1}{(2^k L)^{N+2}} \int_{|z|\sim 2^k L} |u(z)|^2\,\mathrm{d}z \\
  & \lesssim L^{N+\delta+2} \sum_{k=1}^\infty (2^k L)^{b-N-2}\lesssim L^{b+\delta}.
\end{split}
\end{equation*}
For the last term, from H\"older's inequality and the dyadic decomposition we deduce that if $\delta\in [0,\frac{1}{2}[$,
\begin{equation*}
\begin{split}
  \int_{|y|\leq L} |q_4(y,L)| |u(y)|\,\mathrm{d}y & \lesssim L^{N/2} \Big(\int_{|y|\leq L}|u(y)|^2\,\mathrm{d}y\Big)^{1/2} \Big(\sup_{|y|\leq L}|q_4(y,L)|\Big) \\
  & \lesssim L^{\frac{N+b}{2}} \sum_{k=-1}^{[\log_2 \frac{L}{M}]}\int_{\frac{L}{2^{k+1}}\leq |z|\leq \frac{L}{2^k}} \frac{1}{|z|^N} |u(z)|^2\,\mathrm{d}z \\
  & \lesssim L^{\frac{N+b}{2}} \sum_{k=-1}^{[\log_2 \frac{L}{M}]} \Big(\frac{L}{2^k}\Big)^{-N+b} \lesssim
  \begin{cases}
    L^{\frac{3b-N}{2}},\quad &\textrm{if}\;\; b>N, \\
    L^N [\log_2 L],\quad &\textrm{if}\;\; b=N, \\
    L^{\frac{N+b}{2}},\quad &\textrm{if}\;\; b<N,
  \end{cases} \\
  & \lesssim
  \begin{cases}
    L^{b+\delta},\quad & \textrm{if}\;\; b\geq N,(b,\delta)\neq (N,0), \\
    L^N [\log_2 L],\quad & \textrm{if}\;\; (b,\delta)=(N,0), \\
    L^{\frac{N+b}{2}} ,\quad & \textrm{if}\;\; b<N, \\
  \end{cases}
\end{split}
\end{equation*}
and if $\delta\in[\frac{1}{2},1[$,
\begin{equation*}
\begin{split}
  \int_{|y|\leq L} |q_4(y,L)| |u(y)|\,\mathrm{d}y & \lesssim L^{N/2} \Big(\int_{|y|\leq L}|u(y)|^2\,\mathrm{d}y\Big)^{1/2} \Big(\sup_{|y|\leq L}|q_4(y,L)|\Big) \\
  & \lesssim L^{\frac{N+b}{2}} \sum_{k=-1}^{[\log_2 \frac{L}{M}]}\int_{\frac{L}{2^{k+1}}\leq |z|\leq \frac{L}{2^k}} \Big(\frac{1}{|z|^N} + \frac{L}{|z|^{N+1}}\Big) |u(z)|^2\,\mathrm{d}z \\
  & \lesssim L^{\frac{N+b}{2}+1} \sum_{k=-1}^{[\log_2 \frac{L}{M}]} \Big(\frac{L}{2^k}\Big)^{-N-1+b} \lesssim
  \begin{cases}
    L^{\frac{3b-N}{2}},\quad &\textrm{if}\;\; b>N+1, \\
    L^{N+\frac{3}{2}} [\log_2 L],\quad &\textrm{if}\;\; b=N+1, \\
    L^{\frac{N+b}{2}+1},\quad &\textrm{if}\;\; b<N+1,
  \end{cases}
  \\ & \lesssim
  \begin{cases}
    L^{b+\delta},\quad & \textrm{if}\;\; b\geq N+1,\,(b,\delta)\neq (N+1,\frac{1}{2}), \\
    L^{N+\frac{3}{2}}[\log_2 L], \quad &\textrm{if}\;\; (b,\delta)= (N+1,\frac{1}{2}), \\
    L^{\frac{N+b}{2}+1},\quad & \textrm{if}\;\; b<N+1.
  \end{cases}
\end{split}
\end{equation*}
Collecting the above estimates leads to \eqref{eq:estq2}.
\end{proof}

\section{Proof of Theorem \ref{thm:SS}}\label{sec:thm:SS}


As mentioned in the introduction section, the starting point of the main proof is the following local energy inequality for profiles $(u,q)$:
\begin{equation}\label{eq:locEne2}
\begin{split}
  \bigg|\frac{1}{l_2^{N-2\alpha}}
  \int_{|y|\leq l_2} |u(y)|^2 \phi\Big(\frac{y }{l_2}\Big) \mathrm{d}y -
  \frac{1}{l_1^{N-2\alpha}} \int_{|y|\leq l_1} |u(y)|^2 \phi\Big(\frac{y}{l_1}\Big)\mathrm{d}y\bigg|
  \leq \,C \int_{\frac{1}{2}l_1\leq |y|\leq l_2} \frac{|u|^3 + |q| |u| }{|y|^{N-2\alpha+1}} \mathrm{d}y,
\end{split}
\end{equation}
where $0<l_1<l_2$, and $\phi\in \mathcal{D}(\mathbb{R}^N)$
is a radial smooth cutoff function such that $0\leq \phi\leq 1$, $\phi\equiv 1$ on $B_{1/2}(0)$ and
$\phi\equiv 0$ on $B_1^c(0)$. This inequality \eqref{eq:locEne2} is derived from
the energy equality of the original velocity,
\begin{equation}\label{eq:locEne0}
\begin{split}
  \int_{\mathbb{R}^N} |v(t_2,x)|^2 \phi(x)\,\mathrm{d}x - \int_{\mathbb{R}^N} |v(t_1,x)|^2 \phi(x)\,\mathrm{d}x
  =  \int_{t_1}^{t_2}\int_{\mathbb{R}^N}
  \big(|v|^2v + 2(p-d(t))v\big)\cdot\nabla \phi(x) \,\mathrm{d}x \mathrm{d}t,
\end{split}
\end{equation}
where $0<t_1<t_2<T$. By considering \eqref{eq:locEne0} on the region of self-similarity,
and inserting the self-similar scenario \eqref{eq:vpSelf}-\eqref{eq:vpSelf2} into \eqref{eq:locEne0}, and through changing of variables,
we can show \eqref{eq:locEne2} (e.g. see \cite{ChaS}). Notice that $l_i=(T-t_i)^{-\frac{1}{1+\alpha}}$, $i=1,2$ in \eqref{eq:locEne2}.

\subsection{Proof of Theorem \ref{thm:SS}-(1)}

First we consider the case $-1<\alpha<-\delta$. Note that from \eqref{eq:asum}, we have that in this case
\begin{equation*}
  \frac{1}{{\l_2}^{N-2\alpha}}\int_{|y|\leq l_2} |u(y)|^2 \mathrm{d}y \lesssim {l_2}^{2\delta+2\alpha}\longrightarrow 0,
  \quad \textrm{as}\quad l_2\rightarrow \infty.
\end{equation*}
Thus by letting $l_1=2L\gg1$ and $l_2\rightarrow \infty$, we get
\begin{equation}\label{eq:uL2key1}
\begin{split}
  \int_{|y|\leq L} |u(y)|^2 \mathrm{d}y \leq
  C L^{N-2\alpha} \int_{|y|\geq L} \frac{|u|^3 + |q| |u|}{|y|^{N-2\alpha+1}} \mathrm{d}y,
\end{split}
\end{equation}
where $q$ takes the formula as \eqref{eq:qy}.
By the dyadic decomposition, we infer that
\begin{equation}\label{eq:G2Lest}
\begin{split}
  & \, C L^{N-2\alpha} \sum_{k=0}^\infty \int_{2^k L\leq |y|\leq 2^{k+1}L}
  \frac{|u|^3 + |q| |u|}{|y|^{N-2\alpha+1}} \,\mathrm{d}y  \\
  \leq\; & \frac{C}{L} \sum_{k=0}^\infty  2^{-k(N-2\alpha+1)} \int_{2^k L\leq |y|\leq 2^{k+1}L} \big(|u(y)|^3+ |q(y)| |u(y)|\big)\,\mathrm{d}y
\end{split}
\end{equation}
By using the following estimate
\begin{equation}\label{eq:uL2est}
  \int_{|y|\leq L} |u(y)|^2\,\mathrm{d}y\lesssim L^{N+2\delta},\quad \forall L\gg1,
\end{equation}
\eqref{eq:estq2} in Lemma \ref{lem:pres} ensures that
\begin{equation*}
\begin{split}
  \int_{|y|\leq 2^{k+1}L} |u(y)| |q(y)|\,\mathrm{d}y & \lesssim
  \begin{cases}
    (2^k L)^{N+3\delta} + (2^k L)^{N+\delta +1},\quad &\textrm{if}\;\; \delta\neq\frac{1}{2}, \\
    (2^k L)^{N+\frac{3}{2}}[\log_2 (2^k L)],\quad &\textrm{if}\;\; \delta=\frac{1}{2},
  \end{cases} \\
  & \lesssim
  \begin{cases}
  (2^k L)^{N+3\delta},\quad &\textrm{if   }\delta> \frac{1}{2}, \\
  (2^k L)^{N+\frac{3}{2}+\epsilon}, \quad & \textrm{if   }\delta=\frac{1}{2},\\
  (2^k L)^{N+\delta+1} ,\quad &\textrm{if    } \delta<\frac{1}{2},
  \end{cases}
\end{split}
\end{equation*}
with $0<\epsilon\ll 1/2$ a small number.
Thus for all $-1<\alpha<-\delta$, we first obtain a rough bound
\begin{equation}\label{eq:uL2}
\begin{split}
  \int_{|y|\leq L} |u(y)|^2 \mathrm{d}y & \leq
  \begin{cases}
    \frac{C}{L} \sum_{k=0}^\infty 2^{-k(N-2\alpha+1)} (2^k L)^{\max\{N+3\delta,N+1+\delta \}},\quad& \textrm{if}\;\; \delta\neq \frac{1}{2}, \\
    \frac{C}{L} \sum_{k=0}^\infty 2^{-k(N-2\alpha+1)} (2^k L)^{N+\frac{3}{2}+\epsilon},\quad& \textrm{if}\;\; \delta= \frac{1}{2}, \\
  \end{cases} \\
  & \leq
  \begin{cases}
  C L^{N+3\delta-1},\quad &\textrm{if  }\delta\in ]\frac{1}{2},1[, \\
  C L^{N+\delta+\epsilon} ,\quad &\textrm{if   } \delta\in ]0,\frac{1}{2}].
  \end{cases}
\end{split}
\end{equation}
Next we will use \eqref{eq:uL2} to show an more refined bound. By using Lemma \ref{lem:pres} again, and noting that
\begin{equation}\label{eq:fact1}
  \max\{b+\delta, (N+b)/2+1\}=
  \begin{cases}
    b+\delta,\quad & \textrm{if    }b\geq N+2(1-\delta), \\
    \frac{N+b}{2}+1,\quad & \textrm{if   }b<N+2(1-\delta),
  \end{cases}
\end{equation}
we get
\begin{equation}\label{eq:est4}
  \int_{|y|\leq 2^{k+1}L} |u(y)| |q(y)|\,\mathrm{d}y \lesssim
  \begin{cases}
  (2^k L)^{N+4\delta-1},\quad &\textrm{if  }\delta\in [\frac{3}{5},1[, \\
  (2^k L)^{N+\frac{3\delta+1}{2}},\quad &\textrm{if  }\delta\in ] \frac{1}{2}, \frac{3}{5}], \\
  (2^k L)^{N+\frac{\delta+\epsilon}{2}+1},\quad &\textrm{if  }\delta\in ]0, \frac{1}{2}],
  \end{cases}
\end{equation}
Plugging it into \eqref{eq:G2Lest}, we have
\begin{equation}\label{eq:G2}
\begin{split}
  \int_{|y|\leq L} |u(y)|^2 \,\mathrm{d}y  & \leq
  \begin{cases}
  \frac{C}{L}\sum_{k=0}^\infty 2^{-k(N-2\alpha+1)}  (2^k L)^{N+4\delta-1}, \quad &\textrm{if  }\delta\in [\frac{3}{5},1[,\\
  \frac{C}{L}\sum_{k=0}^\infty 2^{-k(N-2\alpha+1)}  (2^k L)^{N+\frac{3\delta+1}{2}},\quad &\textrm{if  }\delta\in ] \frac{1}{2}, \frac{3}{5}], \\
  \frac{C}{L}\sum_{k=0}^\infty 2^{-k(N-2\alpha+1)}  (2^k L)^{N+\frac{\delta+\epsilon}{2}+1},\quad &\textrm{if  }\delta\in ]0, \frac{1}{2}],
  \end{cases} \\
  & \leq
  \begin{cases}
   C L^{N+4\delta-2},\quad &\textrm{if  }\delta\in [\frac{3}{5},1[, \\
   C L^{N+\frac{3\delta-1}{2}},\quad &\textrm{if  }\delta\in ] \frac{1}{2}, \frac{3}{5}], \\
   C L^{N+\frac{\delta+\epsilon}{2}},\quad &\textrm{if  }\delta\in ]0, \frac{1}{2}].
  \end{cases}
\end{split}
\end{equation}
We can repeat the above process for $n+1$ times to show that
\begin{equation}\label{eq:keyeq3}
  \int_{|y|\leq L} |u(y)|^2\,\mathrm{d}y \leq
  \begin{cases}
    C L^{N+2\delta+ (n+1)(\delta-1)},\quad & \textrm{if   }\delta\in [\frac{n+2}{n+4},1[, \\
    C L^{N+\frac{2\delta+ n(\delta-1)}{2}},\quad &\textrm{if   }\delta\in [\frac{n+1}{n+3}, \frac{n+2}{n+4}], \\
    C L^{N+\frac{2\delta + (n-1)(\delta-1)}{2^2}},\quad & \textrm{if   }\delta\in [\frac{n}{n+2},\frac{n+1}{n+3}],\\
    \cdots\quad \cdots \\
    C L^{N+ \frac{2\delta + (\delta-1)}{2^n}},\quad & \textrm{if   }\delta\in ]\frac{1}{2},\frac{3}{5}], \\
    C L^{N + \frac{\delta+\epsilon}{2^n}},\quad & \textrm{if   }\delta\in ]0,\frac{1}{2}].
  \end{cases}
\end{equation}
For each $\delta\in]0,\frac{1}{2}]$, and for $n$ sufficiently large, we get that the power of $L$ is less than $N+\epsilon_0$ for $\epsilon_0>0$
($\epsilon_0$ is the number appearing in \eqref{eq:asum3});
while for each $\delta\in ]\frac{1}{2},1[$, there is some $m\in \mathbb{N}^+$ so that $\delta\in ]\frac{m+1}{m+3},\frac{m+2}{m+4}]$, thus after repeating the above process for
$m+n+1$ times, we get
\begin{equation*}
  \int_{|y|\leq L}|u(y)|^2 \,\mathrm{d}y\leq C L^{N+ \frac{2\delta + m(\delta-1)}{2^{n+1}}},\quad \textrm{for  }\delta\in \big]\frac{m+1}{m+3},\frac{m+2}{m+4}\big],
\end{equation*}
and for $n$ large enough, we infer that the power of $L$ is also less than $N+\epsilon_0$.
But this obviously contradicts with the following estimation from the condition \eqref{eq:asum3}
\begin{equation}\label{eq:fact0}
  \int_{|y|\leq L}|u(y)|^2\,\mathrm{d}y \gtrsim \int_{M\leq|y|\leq L}|y|^{2\epsilon_0}\,\mathrm{d}y \gtrsim L^{N+2\epsilon_0},
\end{equation}
which means there is no possibility to admit nontrivial velocity profiles in the case $-1<\alpha<-\delta$.

Next we consider the case $\alpha\geq -\delta$, and for all $\alpha\geq -\delta$ and $\delta\in ]0,1[$, we prove 
\begin{equation}\label{eq:conc2}
  \int_{|y|\leq L}|u(y)|^2\,\mathrm{d}y \leq C L^{N-2\alpha},\quad \forall L\gg1,
\end{equation}
from which and \eqref{eq:fact0},
we see that the range of $\alpha$ admitting possible nontrivial profiles belongs to $\{\alpha:-\delta\leq \alpha\leq -\epsilon_0\}$.
By letting $l_1=2M$ and $l_2=2L\gg1$, we begin with \eqref{eq:locEne2} to get
\begin{equation}\label{eq:uL2H}
  \int_{|y|\leq L} |u(y)|^2 \mathrm{d}y \leq C L^{N-2\alpha} +  C L^{N-2\alpha}  \int_{M\leq |y|\leq 2L} \frac{|u|^3+ |q| |u|}{|y|^{N-2\alpha+1}} \mathrm{d}y.
\end{equation}
By the dyadic decomposition, we infer that
\begin{equation}\label{eq:H2L}
\begin{split}
   & C L^{N-2\alpha}  \int_{M\leq |y|\leq 2L} \frac{|u|^3+ |q| |u|}{|y|^{N-2\alpha+1}} \mathrm{d}y \\
   \leq\, &\, C L^{N-2\alpha} \sum_{k=-1}^{[\log_2 \frac{L}{M}]} \int_{\frac{L}{2^{k+1}}\leq |y|\leq \frac{L}{2^k }}
  \frac{|u(y)|^3 + |q(y)| |u(y)|}{|y|^{N-2\alpha +1}} \mathrm{d}y \\
  \leq\, & \, \frac{C}{L} \sum_{k=-1}^{[\log_2 \frac{L}{M}]} 2^{k(N-2\alpha+1)}\int_{\frac{L}{2^{k+1}}\leq |y|\leq \frac{L}{2^k}}
  \big(|u(y)|^3 + |q(y)| |u(y)|\big)\,\mathrm{d}y.
\end{split}
\end{equation}
By using the formula of $q$ as \eqref{eq:qy},
and from \eqref{eq:asum2}, \eqref{eq:uL2est} and \eqref{eq:estq2} in Lemma \ref{lem:pres}, we have a rough bound:
\begin{equation*}
\begin{split}
  \int_{|y|\leq L}|u(y)|^2\,\mathrm{d}y & \leq
  \begin{cases}
    C L^{N-2\alpha} +  \frac{C}{L}\sum_{k=-1}^{[\log_2 \frac{L}{M}]}2^{k(N-2\alpha+1)}
    \Big(\Big(\frac{L}{2^k}\Big)^{N+3\delta}+  \Big(\frac{L}{2^k}\Big)^{N+\delta+1}\Big),\quad & \textrm{if}\;\; \delta\neq \frac{1}{2}, \\
    C L^{N-2\alpha} + \frac{C}{L} \sum_{k=-1}^{[\log_2 \frac{L}{M}]} 2^{k(N-2\alpha+1)} \Big(\frac{L}{2^k}\Big)^{N+3/2} [\log \frac{L}{2^k}],\quad &\textrm{if}\;\; \delta=\frac{1}{2},
  \end{cases}
  \\
  & \leq
  \begin{cases}
    C L^{N-2\alpha}[\log_2 L],\quad & \textrm{if}\;\; \alpha \leq -\frac{3\delta -1}{2},\,\delta\in ]\frac{1}{2},1[, \\
    C L^{N+3\delta-1},\quad  & \textrm{if}\;\;  \alpha> -\frac{3\delta-1}{2},\, \delta\in ]\frac{1}{2},1[, \\
    C L^{N-2\alpha} [\log_2 L]^2,\quad & \textrm{if}\;\; \alpha\leq -\frac{\delta}{2},\, \delta\in]0,\frac{1}{2}], \\
    C L^{N+\delta+\epsilon},\quad & \textrm{if}\;\; \alpha > -\frac{\delta}{2},\, \delta\in ]0,\frac{1}{2}],
  \end{cases}
\end{split}
\end{equation*}
with $0<\epsilon\ll \delta $ a small number.
If $\alpha\leq -\frac{3\delta-1}{2}$, $\delta\in ]\frac{1}{2},1[$ or $\alpha\leq -\frac{\delta}{2}$, $\delta\in ]0,\frac{1}{2}]$,
we can improve the bound to drop the additional logarithmic term: indeed, let $0<\epsilon<1$ be a small number chosen later, then we get
\begin{equation*}
  \int_{|y|\leq L}|u(y)|^2\,\mathrm{d}y \leq C_\epsilon L^{N-2\alpha+\epsilon},\quad \textrm{if}\;\;
  \begin{cases}
  \alpha\leq -\frac{3\delta-1}{2}, \delta\in ]\frac{1}{2},1[,\;\textrm{ or } \; \\ \alpha\leq -\frac{\delta}{2}, \delta\in ]0,\frac{1}{2}],
  \end{cases}
\end{equation*}
and inserting this estimate into \eqref{eq:H2L} yields that for all such $(\alpha,\delta)$,
\begin{equation}\label{eq:keyeq5}
\begin{split}
  \int_{|y|\leq L} |u(y)|^2\,\mathrm{d}y \lesssim &
  L^{N-2\alpha} + \frac{1}{L} \sum_{k=-1}^{[\log_2 \frac{L}{M}]} 2^{k(N-2\alpha+1)} \Big(\Big(\frac{L}{2^k}\Big)^{N-2\alpha+\epsilon +\delta} +
  \Big(\frac{L}{2^k}\Big)^{N-\alpha+\epsilon/2 +1}\Big) \\
  \lesssim &
    L^{N-2\alpha} + L^{N-2\alpha+\epsilon+\delta-1}\sum_{k=-1}^{\log_2 (\frac{L}{M})} 2^{k(1-\epsilon-\delta)} + L^{N-\alpha+\epsilon/2} \sum_{k=-1}^{[\log_2 \frac{L}{M}]} 2^{k(-\alpha-\epsilon/2)} \\
  \lesssim & L^{N-2\alpha},
\end{split}
\end{equation}
as long as $0<\epsilon <\min\{1-\delta,-2\alpha\}$, which can be satisfied by choosing 
\begin{equation*}
  \epsilon = 
  \begin{cases}
    \frac{1-\delta}{2},\quad & \textrm{for}\;\; \delta\in ]\frac{1}{2},1[, \\
    \frac{\delta}{2},\quad & \textrm{for}\;\;\delta\in ]0,\frac{1}{2}].
  \end{cases}
\end{equation*}
If $\alpha>-\frac{3\delta-1}{2}$ for $\delta\in ]\frac{1}{2},1[$ or $\alpha>-\frac{\delta}{2}$ for $\delta\in ]0,\frac{1}{2}]$,
we will use the iterative method to reduce the power of $L$. Thanks to \eqref{eq:estq2} in Lemma \ref{lem:pres}, we get \eqref{eq:est4} with $2^{k+1}L$ replaced by $\frac{L}{2^{k}}$,
and by inserting it into \eqref{eq:H2L}, we find
\begin{equation}\label{eq:est1}
\begin{split}
  \int_{|y|\leq L}|u(y)|^2\,\mathrm{d}y \lesssim &
  \begin{cases}
    L^{N-2\alpha} + \frac{1}{L} \sum_{k=-1}^{[\log_2 \frac{L}{M}]} 2^{k(N-2\alpha+1)} \Big(\frac{L}{2^k}\Big)^{N+4\delta-1}, \quad & \textrm{if}\;\; \alpha>-\frac{3\delta-1}{2},\delta\in [\frac{3}{5},1[, \\
    L^{N-2\alpha} + \frac{1}{L} \sum_{k=-1}^{[\log_2 \frac{L}{M}]} 2^{k(N-2\alpha+1)} \Big(\frac{L}{2^k}\Big)^{N+(3\delta-1)/2+1}, \quad & \textrm{if}\;\;  \alpha>-\frac{3\delta-1}{2}, \delta\in ]\frac{1}{2},\frac{3}{5}],\\
    L^{N-2\alpha} + \frac{1}{L} \sum_{k=-1}^{[\log_2 \frac{L}{M}]} 2^{k(N-2\alpha+1)} \Big(\frac{L}{2^k}\Big)^{N+(\delta+\epsilon)/2+1}, \quad & \textrm{if}\;\; \alpha>-\frac{\delta}{2}, \delta\in ]0,\frac{1}{2}],
  \end{cases} \\
  \lesssim &
  \begin{cases}
    L^{N-2\alpha}[\log_2 L],\quad & \textrm{if}\;\; \alpha\in ]-\frac{3\delta-1}{2},-(2\delta-1)],\delta\in [\frac{3}{5},1[, \\
    L^{N+4\delta-2},\quad & \textrm{if}\;\; \alpha > -(2\delta-1), \delta \in [\frac{3}{5},1[, \\
    L^{N-2\alpha}[\log_2 L],\quad & \textrm{if}\;\; \alpha\in ]-\frac{3\delta-1}{2},-\frac{3\delta-1}{2^2}],\delta\in ]\frac{1}{2},\frac{3}{5}], \\
    L^{N+\frac{3\delta-1}{2}},\quad & \textrm{if}\;\; \alpha > -\frac{3\delta-1}{2^2}, \delta \in ]\frac{1}{2},\frac{3}{5}], \\
    L^{N-2\alpha}[\log_2 L],\quad & \textrm{if}\;\; \alpha\in ]-\frac{\delta}{2},-\frac{\delta+\epsilon}{2^2}],\delta\in ]0,\frac{1}{2}], \\
    L^{N+\frac{\delta+\epsilon}{2}},\quad & \textrm{if}\;\; \alpha > -\frac{\delta+\epsilon}{2^2}, \delta \in ]0,\frac{1}{2}].
  \end{cases}
\end{split}
\end{equation}
If $\alpha\in ]-\frac{3\delta-1}{2}$, $-(2\delta-1)],\delta\in [\frac{3}{5},1[$, or $\alpha\in ]-\frac{3\delta-1}{2}$, $-\frac{3\delta-1}{2^2}],\delta\in ]\frac{1}{2},\frac{3}{5}],$
or $\alpha\in ]-\frac{\delta}{2},-\frac{\delta+\epsilon}{2^2}]$, $\delta\in ]0,\frac{1}{2}]$, we can also improve the above bound by removing the logarithmic term:
indeed, this is quite similar to the obtaining of \eqref{eq:keyeq5}, and we only need to choose $0<\epsilon< 1$ so that $\epsilon<\min\{1-\delta, -2\alpha\}$, e.g., set
\begin{equation*}
\epsilon=
\begin{cases}
  \frac{1-\delta}{2},\quad &\textrm{for}\;\; \delta\in [\frac{3}{5},1[, \\
  \frac{\delta}{2^2}, \quad & \textrm{for} \;\; \delta\in]0,\frac{3}{5}[,
\end{cases}
\end{equation*}
then the bound of $\int_{|y|\leq L}|u(y)|^2\,\mathrm{d}y$ can be similarly improved from $C_\epsilon L^{N-2\alpha+\epsilon}$ to the expected $C_\epsilon L^{N-2\alpha}$.
If $\alpha > -(2\delta-1), \delta \in [\frac{3}{5},1[$, or $\alpha > -\frac{3\delta-1}{2^2}, \delta \in ]\frac{1}{2},\frac{3}{5}]$, or
$\alpha > -\frac{\delta+\epsilon}{2^2}, \delta \in ]0,\frac{1}{2}]$, we can further improve the bound as obtaining \eqref{eq:est1}. In conclusion, after repeating the above process for $n+1$ times, we infer
\begin{equation}\label{eq:keyeq4}
  \int_{|y|\leq L} |u(y)|^2\,\mathrm{d}y \leq
  \begin{cases}
    C L^{N+2\delta+ (n+1)(\delta-1)},\quad & \textrm{if   }\alpha> -\frac{(n+3)\delta-(n+1)}{2},\delta\in [\frac{n+2}{n+4},1[, \\
    C L^{N+\frac{2\delta+ n(\delta-1)}{2}},\quad &\textrm{if   }\alpha> -\frac{(n+2)\delta-n}{2^2}, \delta\in [\frac{n+1}{n+3}, \frac{n+2}{n+4}], \\
    C L^{N+\frac{2\delta + (n-1)(\delta-1)}{2^2}},\quad & \textrm{if   }\alpha>-\frac{(n+1)\delta- (n-1)}{2^3},\delta\in [\frac{n}{n+2},\frac{n+1}{n+3}],\\
    \cdots\quad \cdots \\
    C L^{N+ \frac{2\delta + (\delta-1)}{2^n}},\quad & \textrm{if   }\alpha>-\frac{3\delta-1}{2^{n+1}},\delta\in ]\frac{1}{2},\frac{3}{5}], \\
    C L^{N + \frac{\delta+\epsilon}{2^n}},\quad & \textrm{if   }\alpha>-\frac{\delta+\epsilon}{2^{n+1}},\delta\in ]0,\frac{1}{2}], \\
    C L^{N-2\alpha}, \quad & \textrm{if for other scopes of  } (\alpha,\delta).
  \end{cases}
\end{equation}
From \eqref{eq:keyeq4}, in a similar way as the deduction after \eqref{eq:keyeq3}, we deduce that if $(\alpha,\delta)$ is not in the scope so that $\int_{|y|\leq L}|u(y)|^2\,\mathrm{d}y$
is bounded by $C L^{N-2\alpha}$, then the quantity is instead bounded by $C L^{N+\epsilon_0}$, which leads to a contradiction with \eqref{eq:fact0}
and means that such scopes are incompatible. Hence for all $\alpha\geq -\delta$ and $\delta\in ]0,1[$, we obtain \eqref{eq:conc2}

At last we prove \eqref{eq:conc}, and in order to do that, it suffices to prove the following inequality for all $-\delta\leq \alpha\leq -\epsilon_0$,
\begin{equation}\label{eq:Target}
   \int_{|y|\leq L}|u(y)|^2 \mathrm{d}y \gtrsim L^{N-2\alpha},\quad \forall L\gg1.
\end{equation}
Suppose it is not true, then there exists a sequence of numbers $L_k \gg1$ such that
\begin{equation*}
  \frac{1}{L_k^{N-2\alpha}} \int_{|y|\leq L_k} |u(y)|^2 \, \mathrm{d}y \rightarrow 0,\quad \textrm{as}\;\; L_k\rightarrow\infty.
\end{equation*}
Thus by setting $l_2=L_k\rightarrow\infty$ and $l_1=2 L>0$ in \eqref{eq:locEne2}, we get
\begin{equation}\label{eq:estL2}
  \int_{|y|\leq L} |u(y)|^2 \mathrm{d}y \leq   C L^{N-2\alpha} \int_{|y|\geq L} \frac{|u|^3 + |q| |u|}{|y|^{N-2\alpha+1}} \mathrm{d}y.
\end{equation}
From \eqref{eq:conc2}, and by using the decomposition \eqref{eq:G2Lest} and \eqref{eq:estq2} in Lemma \ref{lem:pres} with $b=N-2\alpha$, we have
\begin{equation*}
\begin{split}
  \int_{|y|\leq L} |u(y)|^2\,\mathrm{d}y  & \leq
  \begin{cases}
  \frac{C}{L} \sum_{k=0}^\infty \frac{1}{2^{k (N-2\alpha + 1)}} (2^k L)^{\max\{N-2\alpha +\delta,N-\alpha+1\}},\quad& \textrm{if}\;\; \alpha\neq -\frac{1}{2},\delta\neq \frac{1}{2}, \\
  \frac{C}{L} \sum_{k=0}^\infty \frac{1}{2^{k (N-2\alpha + 1)}} (2^k L)^{\frac{3}{2}}[\log_2 (2^k L)],\quad &\textrm{if}\;\; \alpha= -\frac{1}{2},\delta= \frac{1}{2},
  \end{cases} \\
  & \leq
  \begin{cases}
  C L^{N-2\alpha+\delta-1}, \quad & \textrm{if   } \alpha\in [-\delta,\delta-1], \delta\in ]\frac{1}{2},1[, \\
  C L^{N+\frac{1}{2}+\epsilon},\quad & \textrm{if   }  \alpha=-\frac{1}{2}, \delta=\frac{1}{2}, \\
  C L^{N-\alpha},\quad & \textrm{if   }  \alpha\in [ \delta-1,-\frac{\epsilon_0}{2}], \delta\in [0,1-\frac{\epsilon_0}{2}],(\alpha,\delta)\neq (-\frac{1}{2},\frac{1}{2})
  \end{cases}
\end{split}
\end{equation*}
with $0<\epsilon\ll1/2$ a small number.
Using this improved estimate and Lemma \ref{lem:pres} again, similarly as above we find
\begin{equation*}
\begin{split}
  \int_{|y|\leq L} |u(y)|^2\,\mathrm{d}y  & \leq
  \begin{cases}
    \frac{C}{L} \sum_{k=0}^\infty \frac{1}{2^{k (N-2\alpha + 1)}} (2^k L)^{N-2\alpha +2\delta-1},\quad &\textrm{if   }\alpha\in [-\delta,\frac{3}{2}(\delta-1)],\delta\in [\frac{3}{5},1[, \\
    \frac{C}{L} \sum_{k=0}^\infty \frac{1}{2^{k (N-2\alpha + 1)}} (2^k L)^{N-\alpha +\frac{\delta-1}{2} +1},\quad &\textrm{if   }\alpha\in [\frac{3}{2}(\delta-1),\delta-1],\delta\in ]\frac{1}{2},1[, \\
    \frac{C}{L} \sum_{k=0}^\infty \frac{1}{2^{k (N-2\alpha + 1)}} (2^k L)^{N+\frac{-\alpha+\epsilon}{2} +1},\quad &\textrm{if   }\alpha\in [\delta-1,-\frac{\epsilon_0}{2}],\delta\in [0,1-\frac{\epsilon_0}{2}], \\
  \end{cases} \\
  & \leq
  \begin{cases}
    L^{N-2\alpha +2\delta-2},\quad &\textrm{if   }\alpha\in [-\delta,\frac{3}{2}(\delta-1)],\delta\in [\frac{3}{5},1[, \\
    L^{N-\alpha +\frac{\delta-1}{2}},\quad &\textrm{if   }\alpha\in [\frac{3}{2}(\delta-1),\delta-1],\delta\in ]\frac{1}{2},1[, \\
    L^{N+\frac{-\alpha+\epsilon}{2}},\quad &\textrm{if   }\alpha\in [\delta-1,-\frac{\epsilon_0}{2}],\delta\in [0,1-\frac{\epsilon_0}{2}], \\
  \end{cases}
\end{split}
\end{equation*}
By repeating the above process for $n+1$ times leads to
\begin{equation*}
\begin{split}
  \int_{|y|\leq L} |u(y)|^2\,\mathrm{d}y
  \leq
  \begin{cases}
    L^{N-2\alpha +(n+1)(\delta-1)},\quad &\textrm{if   }\alpha\in [-\delta,\frac{n+2}{2}(\delta-1)],\delta\in [\frac{n+2}{n+4},1[, \\
    L^{N-\alpha +\frac{n}{2}(\delta-1)},\quad &\textrm{if   }\alpha\in [\frac{n+2}{2}(\delta-1),\frac{n+1}{2}(\delta-1)],\delta\in [\frac{n+1}{n+3},1[, \\
    \cdots\quad \cdots \\
    L^{N-\frac{\alpha}{2^{n-1}} +\frac{1}{2^n}(\delta-1)},\quad &\textrm{if   }\alpha\in [\frac{3}{2}(\delta-1),(\delta-1)],\delta\in ]\frac{1}{2},1[, \\
    L^{N+\frac{-\alpha+\epsilon}{2^n}},\quad &\textrm{if   }\alpha\in [\delta-1,-\frac{\epsilon_0}{2}],\delta\in [0,1-\frac{\epsilon_0}{2}]. \\
  \end{cases}
\end{split}
\end{equation*}
In a similar way as the deduction after \eqref{eq:keyeq3}, and for $n$ large enough, we see that for all $-\delta\leq \alpha\leq-\epsilon_0$,
\begin{equation*}
  \int_{|y|\leq L} |u(y)|^2 \,\mathrm{d}y \lesssim L^{N+\epsilon_0},\quad \forall L\gg1,
\end{equation*}
which clearly contradicts with the estimation \eqref{eq:fact0} derived from \eqref{eq:asum3}, and thus \eqref{eq:Target} is not compatible and \eqref{eq:conc} is followed.

\subsection{Proof of Theorem \ref{thm:SS}-(2)}
Since $\delta<\frac{1}{2}$ and $\alpha>-\frac{1}{2}$, we have $A=0$ in the formula of $q$ \eqref{eq:qy},
and we can use the better estimate \eqref{eq:estq3} instead of \eqref{eq:estq2} in the proof.
We first consider the case $-1<\alpha<-\delta$. Similarly as above, we also begin with \eqref{eq:uL2key1}, and by virtue of \eqref{eq:uL2est} and \eqref{eq:estq3}, we get
\begin{equation*}
  \int_{|y|\leq 2^{k+1} L} |u(y)| |q(y)|\,\mathrm{d}y\lesssim (2^k L)^{N+3\delta},
\end{equation*}
and
\begin{equation*}
  \int_{|y|\leq L}|u(y)|^2\,\mathrm{d}y\leq \frac{C}{L}\sum_{k=0}^\infty \frac{1}{2^{k(N-2\alpha+1)}} (2^k L)^{N+3\delta}\leq C L^{N+3\delta-1}.
\end{equation*}
We can repeatedly use this process to show that
\begin{equation*}
  \int_{|y|\leq L} |u(y)|^2\,\mathrm{d}y \leq C L^{N+2\delta - (n+1)(1-\delta)},
\end{equation*}
as long as $N+2\delta -n(1-\delta) \geq N-2\delta$, that is, $n\leq \frac{4\delta}{1-\delta}$. Set $n_0 = [\frac{4\delta}{1-\delta}]$, then we obtain
\begin{equation*}
  \int_{|y|\leq L} |u(y)|^2\,\mathrm{d}y \leq C L^{N+2\delta - (n_0+1)(1-\delta)}\leq C L^{N-2\delta},
\end{equation*}
which clearly contradicts with the estimation from the condition \eqref{eq:asum4}
\begin{equation}\label{eq:fact2}
  \int_{|y|\leq L} |u(y)|^2\,\mathrm{d}y \gtrsim \int_{M\leq |y|\leq L} 1\,\mathrm{d}y \gtrsim L^N,
\end{equation}
and means that the case $-1<\alpha<\delta$ is not compatible.

Next we consider the case $\alpha \geq -\delta$ to prove \eqref{eq:conc2}. We similarly begin with \eqref{eq:uL2H}, and from \eqref{eq:H2L} and \eqref{eq:estq3} in Lemma \ref{lem:pres},
we see that
\begin{equation*}
  \int_{|y|\leq \frac{L}{2^k}} |u(y)| |q(y)|\,\mathrm{d}y \lesssim \Big(\frac{L}{2^k}\Big)^{N+3\delta},
\end{equation*}
and
\begin{equation}\label{eq:est2}
\begin{split}
  \int_{|y|\leq L} |u(y)|^2\,\mathrm{d}y & \leq C L^{N-2\alpha} + \frac{C}{L}\sum_{k=-1}^{[\log_2 \frac{L}{M}]} 2^{k(N-2\alpha+1)} \Big(\frac{L}{2^k}\Big)^{N+3\delta} \\
  & \leq
  \begin{cases}
    C L^{N+3\delta-1}, \quad & \textrm{if}\;\; \alpha >-\frac{3\delta-1}{2}, \\
    C L^{N-2\alpha} [\log_2 L], \quad & \textrm{if}\;\; \alpha\leq -\frac{3\delta-1}{2}.
  \end{cases}
\end{split}
\end{equation}
For $\alpha\leq -\frac{3\delta-1}{2}$, we can replace the bound by $C_\epsilon L^{N-2\alpha+\epsilon}$ with $0<\epsilon<1-\delta$, then by repeating the process once more, we obtain
\begin{equation}\label{eq:est3}
  \int_{|y|\leq L} |u(y)|^2\,\mathrm{d}y \lesssim L^{N-2\alpha} + \frac{1}{L} \sum_{k=-1}^{[\log_2 \frac{L}{M}]} 2^{k(N-2\alpha+1)} \Big(\frac{L}{2^k}\Big)^{N-2\alpha+\delta +\epsilon}
  \lesssim L^{N-2\alpha}.
\end{equation}
For $\alpha> -\frac{3\delta-1}{2}$, and if $\delta<\frac{1}{3}$, we see that \eqref{eq:est2} contradicts with \eqref{eq:fact2}, and such a case is not compatible; otherwise,
if $\delta\geq \frac{1}{3}$, we can further improve the bound by iteration:
\begin{equation*}
\begin{split}
  \int_{|y|\leq L} |u(y)|^2 \mathrm{d}y & \lesssim L^{N-2\alpha} + \frac{1}{L}\sum_{k=-1}^{[\log_2 \frac{L}{M}]} 2^{k(N-2\alpha +1)} \Big(\frac{L}{2^k}\Big)^{N+4\delta-1} \\
  & \lesssim
  \begin{cases}
    L^{N+4\delta-2},\quad & \textrm{if}\;\; \alpha> -(2\delta-1), \\
    L^{N-2\alpha} [\log_2 L], \quad & \textrm{if}\;\; \alpha \in ]-\frac{3\delta-1}{2},-(2\delta-1)].
  \end{cases}
\end{split}
\end{equation*}
For $\alpha\in ]-\frac{3\delta-1}{2},-(2\delta-1)]$, we can similarly obtain \eqref{eq:est3} in this case; while for $\alpha> -(2\delta-1)$,
the bound $C L^{N+4\delta-2}$ contradicts with \eqref{eq:fact2} due to $\delta<\frac{1}{2}$, which means such a case is incompatible.
Therefore, for all $\alpha\geq -\delta$ and $\delta<\frac{1}{2}$, we have
\begin{equation}
  \int_{|y|\leq L} |u(y)|^2\,\mathrm{d}y \leq C L^{N-2\alpha}, \quad \forall L\gg1.
\end{equation}
Combined with \eqref{eq:fact2}, we furthermore infer that the scope of $\alpha$ admitting possible nontrivial velocity profiles is $\{\alpha:-\delta\leq \alpha\leq 0\}$.

In the end we prove \eqref{eq:conc} for $-\delta\leq \alpha\leq 0$, and it suffices to prove \eqref{eq:Target} for $\alpha$ in this range.
Similarly as above, we begin with \eqref{eq:estL2} to get
\begin{equation*}
  \int_{|y|\leq L}|u(y)|^2\,\mathrm{d}y \leq \frac{C}{L}\sum_{k=0}^\infty \frac{1}{2^{k(N-2\alpha+1)}} (2^k L)^{N-2\alpha +\delta}\leq C L^{N-2\alpha+\delta-1}.
\end{equation*}
By iteration, we can show that, as long as $N+2\alpha-n(1-\delta)\geq N-2\delta$, we have
\begin{equation*}
  \int_{|y|\leq L} |u(y)|^2\,\mathrm{d}y \leq C L^{N-2\alpha- (n+1)(1-\delta)}.
\end{equation*}
Set $n_0'=[\frac{2\alpha+2\delta}{1-\delta}]$, thus we find
\begin{equation*}
  \int_{|y|\leq L} |u(y)|^2\, \mathrm{d}y \leq C L^{N-2\alpha -(n'_0+1)(1-\delta)} \leq C L^{N-2\delta},
\end{equation*}
which contradicts with the estimation \eqref{eq:fact2}, and thus proves \eqref{eq:Target} and \eqref{eq:conc} for $-\delta\leq\alpha\leq 0$.

\vskip0.2cm

\textbf{Acknowledgments.}
Part of the work was done when the author was a post-doctor of Universit\'e Paris-Est from 2012-09 to 2013-08, and
he would like to express his gratitude for the guidance of Prof. R\'egis Monneau and Prof. Marco Cannone.
The author is partially supported by NSFC grant 11401027.

\vskip0.3cm

\end{document}